\tikzset{vertex/.style={circle,draw,fill,inner sep=0pt,minimum size=1mm}}
\theoremstyle{plain}
\newtheorem{thm}{Theorem}
\newtheorem{lem}[thm]{Lemma}
\newtheorem{prop}[thm]{Proposition}
\newtheorem{cor}[thm]{Corollary}
\newtheorem{remark}[thm]{Remark}
\theoremstyle{definition}
\newtheorem{definition}[thm]{Definition}
\newtheorem{exl}[thm]{Example}
\numberwithin{thm}{section}
\newcommand{\adj}{\leftrightarrow}
\def\Z{{\mathbb Z}}
\def\N{{\mathbb N}}
\def\R{{\mathbb R}}
\begin{document}
\title{Remarks on Fixed Point Assertions in Digital Topology, 2}
\author{Laurence Boxer
         \thanks{
    Department of Computer and Information Sciences,
    Niagara University,
    Niagara University, NY 14109, USA;
    and Department of Computer Science and Engineering,
    State University of New York at Buffalo.
    E-mail: boxer@niagara.edu
    }
    }
\date{ }
\maketitle

\begin{abstract}
Several recent papers in digital topology have sought to
obtain fixed point results by mimicking the use of tools
from classical topology, such as complete metric spaces. We show that
in many cases, researchers using these tools have derived
conclusions that are incorrect, trivial, or limited.

Key words and phrases: digital topology, fixed point,
metric space
\end{abstract}

\section{Introduction}
This paper continues the work of~\cite{BxSt18} and quotes or paraphrases from it.

Recent papers have attempted to apply to digital images
ideas from Euclidean topology and real analysis concerning 
metrics and fixed points. While the underlying motivation
of digital topology comes from Euclidean topology and real
analysis, some applications of fixed point theory recently featured in the 
literature of digital topology seem of doubtful worth. Although papers
including~\cite{Rosenfeld87,Bx-etal} have valid and
interesting results for fixed points and for ``almost" or
``approximate" fixed points in digital topology,
many other published assertions concerning fixed points in 
digital topology are incorrect, trivial (e.g., applicable
only to singletons, or only to constant functions), or limited,
as discussed in~\cite{BxSt18}. After submitting~\cite{BxSt18},
we learned of several additional publications with assertions 
characterized as above; these are discussed in the current paper.

The less-than-ideal papers we discuss have in common a definition of a 
digital metric space $(X,d,\kappa)$, where $X$ is a set of lattice points, 
$d$ is a metric (typically, the Euclidean), and $\kappa$ is an 
adjacency relation on $X$, making
$(X,\kappa)$ a graph; and then these papers never make use of $\kappa$.
Functions considered usually are all continuous in the topological sense, 
since the metric $d$ usually imposes a discrete topology on the digital 
image; but are often discontinuous in the digital sense of preserving
graph connectedness. Many of these papers' assertions are modifications
of results known for the Euclidean topology of $\R^n$ that tell us 
little or nothing about digital images as graphs. Some of these papers' 
assertions are of interest if we regard the functions investigated as 
defined on subsets of $\R^n$. In many cases, we offer corrections, 
notes on their limitations, or improvements.

\section{Preliminaries}
We let $\Z$ denote the set of integers, and $\R$, the real line.

We consider a digital image as a graph $(X,\kappa)$, where 
$X \subset \Z^n$ for some positive integer $n$ and
$\kappa$ is an adjacency relation on $X$. We will often assume
that $X$ is a finite set, as in the ``real world."

A digital metric space is~\cite{EgeKaraca} a triple
$(X,d,\kappa)$ where $(X,\kappa)$ is a digital image and
$d$ is a metric for $X$. In~\cite{EgeKaraca}, $d$ was taken
to be the Euclidean metric, as was the case in many subsequent papers,
but we will not limit our
discussion to the Euclidean metric. Often, however, we will assume
$d$ is an $\ell_p$ metric (see section~\ref{l-p-def}).

The {\em diameter} of a metric space $(X,d)$
is 
\[diam \, X = \max \{d(x,y) \, | \, x,y \in X\}.\]

\subsection{Adjacencies}
The most commonly used adjacencies for digital images
are the $c_u$-adjacencies, defined as follows.
\begin{definition}
Let $p,q \in \Z^n$, $p=(p_1,\ldots,p_n)$, 
$q=(q_1,\ldots,q_n)$, $p \ne q$. Let $1 \le u \le n$. We say
$p$ and $q$ are $c_u$-adjacent, denoted
$p \adj_{c_u} q$ or $p \adj q$ when the adjacency is
understood, if
\begin{itemize}
    \item for at most $u$ distinct indices $i$,
          $|p_i - q_i| = 1$, and
    \item for all other indices $j$, $p_j = q_j$.
\end{itemize}
\end{definition}

Often, a $c_u$-adjacency is denoted by the number
of points in $\Z^n$ that are $c_u$-adjacent to a given
point. E.g.,
\begin{itemize}
    \item in $\Z^1$, $c_1$-adjacency is 2-adjacency;
    \item in $\Z^2$, $c_1$-adjacency is 4-adjacency and
          $c_2$-adjacency is 8-adjacency;
    \item in $\Z^3$, $c_1$-adjacency is 8-adjacency,
          $c_2$-adjacency is 18-adjacency, and
          $c_3$-adjacency is 26-adjacency.
\end{itemize}

Other adjacencies for digital images are discussed in
papers such as \cite{Herman,Bx17,Bx18}.

A {\em digital interval} is a digital image of the form
$([a,b]_Z, 2)$, where $a < b$ and 
$[a,b]_Z = \{z \in \Z \, | \, a \le z \le b\}$.

\subsection{$\ell_p$ metric}
\label{l-p-def}
Let $X \subset \R^n$ and let $x=(x_1,\ldots, x_n)$ and
$y=(y_1,\ldots,y_n)$ be points of $X$. Let
$1 \le p \le \infty$. The $\ell_p$ metric $d$ for $X$ is defined by
\[ d(x,y) = \left \{ \begin{array}{ll}
        \left (    \sum_{i=1}^n |x_i - y_i|^p \right )^{1/p} &
            \mbox{for } 1 \le p < \infty; \\
            \max \{|x_i - y_i|\}_{i=1}^n &
            \mbox{for } p = \infty. \end{array}
\right .
\]
For $p=1$, this gives us the {\em Manhattan metric}
$d(x,y) = \sum_{i=1}^n |x_i - y_i|$; for $p=2$, we have the
{\em Euclidean metric} $d(x,y) = (\sum_{i=1}^n |x_i - y_i|^2)^{1/2}$.

The following are easily proved.

\begin{prop}
Let $x,y \in \Z^n$ and let $d$ be any $\ell_p$ metric. Then
\begin{itemize}
\item if $d(x,y) < 1$, then $x=y$;
\item if $1 \le u \le n$ and $x \adj_{c_u} y$, then
      $d(x,y) \le u^{1/p}$.
\end{itemize}
\end{prop}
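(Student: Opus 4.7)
The plan is to prove both bullets by a short direct calculation, splitting the argument in each case according to whether $p$ is finite or $p=\infty$, and using the fact that for integer points the coordinate differences $|x_i-y_i|$ are nonnegative integers.

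For the first bullet, I would argue by contrapositive: assume $x\neq y$ and show $d(x,y)\ge 1$. Since $x,y\in\Z^n$ differ, some index $i_0$ has $|x_{i_0}-y_{i_0}|\ge 1$. In the case $1\le p<\infty$, all summands in $\sum_{i=1}^n |x_i-y_i|^p$ are nonnegative, so dropping every term except the $i_0$ term gives $d(x,y)\ge (|x_{i_0}-y_{i_0}|^p)^{1/p}=|x_{i_0}-y_{i_0}|\ge 1$. In the case $p=\infty$, $d(x,y)=\max_i |x_i-y_i|\ge |x_{i_0}-y_{i_0}|\ge 1$. Either way, the contrapositive is established.

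For the second bullet, suppose $x\adj_{c_u}y$. Then, by definition of $c_u$-adjacency, at most $u$ of the differences $|x_i-y_i|$ equal $1$ and the remainder are $0$. For $1\le p<\infty$, we get
\[ d(x,y)=\left(\sum_{i=1}^n |x_i-y_i|^p\right)^{1/p}\le (u\cdot 1^p)^{1/p}=u^{1/p}. \]
For $p=\infty$, $d(x,y)=\max_i |x_i-y_i|\le 1$, and with the standard convention $u^{1/\infty}=\lim_{p\to\infty}u^{1/p}=1$ (so that the statement makes sense at $p=\infty$), this again gives $d(x,y)\le u^{1/p}$.

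Neither part is subtle, so there is no real main obstacle; the only thing worth flagging explicitly is that the argument for the first bullet exploits integrality of $x,y$ (without it the claim would fail even in $\R$), and that the $p=\infty$ case of the second bullet requires interpreting $u^{1/\infty}$ as $1$. I would note both points in the proof for clarity and keep the presentation short.
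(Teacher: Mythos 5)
Your proof is correct and complete; the paper offers no proof of this proposition (it is labeled ``easily proved''), and your direct computation, including the contrapositive for the first bullet and the case split on $p=\infty$, is exactly the intended elementary argument. The two points you flag --- integrality of the coordinates and the convention $u^{1/\infty}=1$ --- are worth the explicit mention you give them.
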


\subsection{Digital continuity}
\begin{definition}
\label{digitalContinuity}
\cite{Rosenfeld87,Bx99}
A function $f: (X,\kappa) \to (Y, \lambda)$ between
digital images is $(\kappa,\lambda)$-{\em continuous}
(or just {\em continuous} when $\kappa$ and $\lambda$ are
understood) if for every $\kappa$-connected subset 
$X'$ of $X$, $f(X')$ is a $\lambda$-connected subset of $Y$.
\end{definition}

\begin{thm}
{\rm \cite{Bx99}}
A function $f: (X,\kappa) \to (Y, \lambda)$ between
digital images is $(\kappa,\lambda)$-{\em continuous} if
and only if $x \adj_{\kappa} x'$ in $X$ implies either
$f(x)=f(x')$ or $f(x) \adj_{\lambda} f(x')$ in $Y$.
\end{thm}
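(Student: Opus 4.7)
The plan is a standard ``local-to-global'' argument using the fact that $\kappa$-connectedness in a digital image coincides with path-connectedness via a sequence of adjacent points.

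For the forward direction (continuity in the sense of Definition~\ref{digitalContinuity} implies the local condition), I would simply take $x \adj_\kappa x'$ and apply the hypothesis to the two-point set $X' = \{x,x'\}$. The set $X'$ is $\kappa$-connected because $x$ and $x'$ are adjacent, so $f(X') = \{f(x), f(x')\}$ must be $\lambda$-connected. A subset of $Y$ with at most two elements is $\lambda$-connected precisely when it is a singleton or consists of two $\lambda$-adjacent points, yielding the conclusion $f(x) = f(x')$ or $f(x) \adj_\lambda f(x')$.

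For the reverse direction, I would take a $\kappa$-connected set $X' \subseteq X$ and two image points $y_0, y_k \in f(X')$, and produce a $\lambda$-path between them in $f(X')$. Choose preimages $x_0, x_k \in X'$ with $f(x_i) = y_i$. By $\kappa$-connectedness of $X'$ there is a sequence $x_0, x_1, \ldots, x_k \in X'$ in which consecutive terms are equal or $\kappa$-adjacent. Applying the local hypothesis at each step, consecutive values $f(x_i), f(x_{i+1})$ are equal or $\lambda$-adjacent, so the sequence $f(x_0), f(x_1), \ldots, f(x_k)$ is a walk in $f(X')$ connecting $y_0$ to $y_k$; deleting repetitions gives a $\lambda$-path. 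Since any two points of $f(X')$ are thus $\lambda$-path-joined inside $f(X')$, the image is $\lambda$-connected.

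There is no real obstacle here; the only subtlety worth being explicit about is the equivalence, implicit throughout digital topology, between ``connected'' and ``joined by a finite sequence of adjacent-or-equal points,'' which is what makes the reverse direction go through. If one wished to avoid invoking this equivalence one could instead argue contrapositively, assuming $f(X')$ disconnects as $A \sqcup B$ with no $\lambda$-adjacencies between $A$ and $B$, pulling this back to a disconnection $f^{-1}(A) \cap X' \sqcup f^{-1}(B) \cap X'$ of $X'$, and deriving a contradiction from the local hypothesis applied to any $\kappa$-adjacency crossing the partition.
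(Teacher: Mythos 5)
Your argument is correct, and it is the standard proof of this characterization. Note that the paper itself does not prove this statement --- it is quoted from the reference \cite{Bx99} without proof --- so there is no in-paper argument to compare against; your two directions (restricting continuity to the two-point connected set $\{x,x'\}$, and the local-to-global walk argument using the equivalence of $\kappa$-connectedness with joinability by finite sequences of adjacent-or-equal points) are exactly the argument given in the cited source. The one subtlety you flag, that ``connected'' in digital topology means graph-connected and hence path-connected, is indeed the only point that needs care, and you handle it appropriately.
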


\subsection{Cauchy sequences and complete metric spaces}
The papers~\cite{DalalEtAl,EgeKaraca,Han16,Hossain-etal,Jain,JR17b,JR17,JR18,Mishra-etal,Ege-etal,RaniJyo,SrideviKKb,SrideviKK} apply to 
digital images the notions of Cauchy sequence and complete metric space.
Since if the digital image $X$ is finite or uses a common metric $d$ such as an $\ell_p$ 
metric, the digital metric space $(X,d,\kappa)$ is a discrete topological space,
the digital versions of these notions are quite limited.

Recall that a sequence of points $\{x_n\}$ in a metric space 
$(X,d)$ is a {\em Cauchy sequence}
if for all $\varepsilon > 0$ there exists $n_0 \in \N$
such that $m,n > n_0$ implies
$d(x_m,x_n) < \varepsilon$. If every Cauchy sequence in $X$ has a limit, then $(X,d)$ is a 
{\em complete metric space}.

It has been shown that under a mild additional assumption, a digital
Cauchy sequence is eventually constant.

\begin{thm}
{\rm ~\cite{Han16,BxSt18}}
\label{Han-Cauchy}
Let $a > 0$. If $d$ is a metric on a digital image $(X,\kappa)$ such that
for all distinct $x,y \in X$ we have $d(x,y) > a$, then
for any Cauchy sequence $\{x_i\}_{i=1}^{\infty} \subset X$ 
there exists $n_0 \in \N$ such that $m,n > n_0$ implies $x_m = x_n$.
\end{thm}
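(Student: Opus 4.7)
The plan is to exploit the hypothesis that points are uniformly separated by at least $a$ in the metric $d$, and use this to contradict the Cauchy property unless the sequence terminates in a single point.

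First, I would take $\varepsilon = a$ (or any positive number $\le a$) in the definition of a Cauchy sequence. By assumption there exists $n_0 \in \N$ such that $m,n > n_0$ implies $d(x_m,x_n) < \varepsilon = a$. Second, I would invoke the hypothesis that $d(x,y) > a$ for all distinct $x,y \in X$: contrapositively, $d(x,y) \le a$ forces $x = y$. Combining these two observations, for all $m,n > n_0$ we have $d(x_m,x_n) < a$, hence $x_m = x_n$.

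This is essentially a one-line argument, so there is no real obstacle; the only subtlety to flag is that the conclusion does not require any assumption about the adjacency $\kappa$ or about completeness of $(X,d)$ — it is purely a consequence of the uniform lower bound on distances between distinct points. In particular, the hypothesis is automatically satisfied when $X \subset \Z^n$ and $d$ is an $\ell_p$ metric (with $a$ any value in $(0,1)$), by the first bullet of the proposition in Section~\ref{l-p-def}, which explains why the theorem applies so broadly to digital metric spaces.
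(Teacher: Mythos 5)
Your argument is correct and is exactly the intended one: the paper states this theorem as a cited result from \cite{Han16,BxSt18} without reproducing a proof, and the standard proof is precisely your one-line application of the Cauchy definition with $\varepsilon=a$ combined with the contrapositive of the separation hypothesis. Your closing remark about $\ell_p$ metrics on $\Z^n$ matches the paper's own Remark following Corollary~\ref{Han-Cauchy-cor}.
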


An immediate consequence of Theorem~\ref{Han-Cauchy} is the 
following.

\begin{cor}
{\rm \cite{Han16}}
\label{Han-Cauchy-cor}
Let $(X,d,\kappa)$ be a digital metric space. If $d$ is a metric on $(X,\kappa)$ such 
that for all distinct $x,y \in X$ we have $d(x,y) > a$ for some constant $a > 0$, 
then any Cauchy sequence in $X$ is eventually constant, and $(X,d)$ is a complete metric space.
\end{cor}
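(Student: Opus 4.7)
The plan is to derive this directly from Theorem~\ref{Han-Cauchy}, since the hypotheses of the corollary are precisely those of the theorem. First I would invoke Theorem~\ref{Han-Cauchy} to conclude that every Cauchy sequence $\{x_i\}_{i=1}^\infty \subset X$ is eventually constant: there exists $n_0 \in \N$ and some $x^* \in X$ such that $x_m = x^*$ for all $m > n_0$.

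Next I would observe that an eventually constant sequence trivially converges to its eventual value. Given any $\varepsilon > 0$, for all $m > n_0$ we have $d(x_m, x^*) = d(x^*, x^*) = 0 < \varepsilon$, so $x_i \to x^*$ in $(X,d)$. Since $x^* \in X$, the limit lies in the space.

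Because every Cauchy sequence in $X$ converges to a point of $X$, the metric space $(X,d)$ satisfies the definition of completeness. This gives both conclusions of the corollary.

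I do not anticipate any obstacle: the hypothesis that distinct points are separated by distance exceeding $a$ forces a Cauchy tail to collapse to a single point, and the standard observation that eventually constant sequences converge closes the argument. The only subtlety worth flagging is that one must notice $x^* \in X$ (not just in some ambient space), but this is automatic since each $x_i \in X$ and the sequence equals $x^*$ from some point on.
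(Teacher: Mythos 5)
Your proposal is correct and follows exactly the route the paper intends: the paper presents this corollary as ``an immediate consequence'' of Theorem~\ref{Han-Cauchy}, with no further proof, and your argument simply fills in the routine details (an eventually constant sequence converges to its eventual value, which lies in $X$, so the space is complete). Nothing to add.
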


\begin{remark}
{\rm \cite{BxSt18}
It is easily seen that the hypotheses of 
Theorem~\ref{Han-Cauchy} and 
Corollary~\ref{Han-Cauchy-cor} 
are satisfied for any finite digital metric space, or for a 
digital metric space $(X,d,\kappa)$ for which the metric $d$ 
is any $\ell_p$ metric. Thus, a Cauchy sequence that is
not eventually constant can only occur in an infinite
digital metric space with an unusual metric. Such an
example is given below.
}
\end{remark}

\begin{exl}
{\rm \cite{BxSt18}}
Let $d$ be the metric on $(\N,c_1)$ defined by
$d(i,j) = |1/i - 1/j|$. Then $\{i\}_{i=1}^{\infty}$ is
a Cauchy sequence for this metric that does not have a
limit.
\end{exl}

\subsection{Function sets $\Psi$, $\Phi$}
Below, we define sets of functions $\Psi, \Phi$ that will
be used in the following.

\begin{definition}
{\rm \cite{JR17b}}
\label{psi1Family}
Let $\Psi$ be the set of functions 
$\psi: [0,\infty) \to [0,\infty)$ such that for each
$\psi \in \Psi$ we have
\begin{itemize}
    \item $\psi$ is nondecreasing, and
    \item $\sum_{n=1}^{\infty} \psi^n(t) < \infty$
          for all $t > 0$, where $\psi^n$ represents the
          $n$-fold composition of $\psi$.
\end{itemize}
\end{definition}

\begin{definition}
{\rm \cite{SrideviKKb}}
Let $\Phi$ be the set of functions $\phi: [0,\infty) \to [0,\infty)$
such that $\phi$ is increasing, $\phi(t) = 0$ if and only if $t=0$,
and $\phi(t) < t$ for $t > 0$.
\end{definition}

\begin{prop}
\label{psi1UpperBound}
Let $\psi \in \Psi$. Then for all $t > 0$, $\psi(t) < t$.
\end{prop}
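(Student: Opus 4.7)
The plan is to argue by contradiction, exploiting the two properties in the definition of $\Psi$: monotonicity and summability of iterates. Suppose, toward a contradiction, that there exists some $t_0 > 0$ with $\psi(t_0) \ge t_0$.

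Next I would use the nondecreasing hypothesis to propagate this inequality through all iterates. Applying $\psi$ to both sides of $\psi(t_0) \ge t_0$ gives $\psi^2(t_0) \ge \psi(t_0) \ge t_0$, and an easy induction on $n$ yields $\psi^n(t_0) \ge t_0$ for every $n \ge 1$.

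Then I would derive the contradiction from the series condition. Since each term of $\sum_{n=1}^{\infty} \psi^n(t_0)$ is at least the positive constant $t_0$, the series diverges, contradicting the assumption that $\sum_{n=1}^{\infty} \psi^n(t) < \infty$ for all $t > 0$. Hence no such $t_0$ exists, and $\psi(t) < t$ for every $t > 0$.

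I do not anticipate any real obstacle: both hypotheses of Definition~\ref{psi1Family} are used in a direct way, and the argument requires no further machinery. The only small subtlety is checking that $\psi^n(t_0) \ge t_0$ for \emph{all} $n$ (not just $n=1$), which is exactly where the nondecreasing property is needed.
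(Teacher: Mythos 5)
Your proof is correct and follows essentially the same route as the paper's: assume $\psi(t_0)\ge t_0$ for some $t_0>0$, use monotonicity to show the iterates stay bounded below by a positive constant, and contradict the summability condition. The only cosmetic difference is that you bound each term below by $t_0$ directly, while the paper notes the sequence of iterates is nondecreasing; both yield the same divergence.
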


\begin{proof}
Suppose there exists $t_0 > 0$ such that $\psi(t_0) \ge t_0$. Since 
$\psi$ is nondecreasing, an easy induction yields that
$\psi^{n+1}(t_0) \ge \psi^n(t_0)$ for all $n \in \N$. Therefore,
$\sum_{n=1}^{\infty} \psi^n(t_0) = \infty$, contrary to Definition~\ref{psi1Family}.
The contradiction establishes the assertion.
\end{proof}

A notion often used with the set~$\Psi$ is given by the following.

\begin{definition}
{\rm \cite{SametEtAl}}
\label{alphaAdmissible}
Let $T: X \to X$ and $\alpha: X \times X \to [0, \infty)$.
We say $T$ is {\em $\alpha$-admissible} if
$\alpha(x,y) \ge 1$ implies $\alpha(T(x),T(y)) \ge 1$.
\end{definition}

\subsection{An example}
\begin{exl}
\label{counter}
{\rm \cite{BxSt18}}
Let 
\[ X = \{p_1 = (0,0,0,0,0), ~~ p_2 = (2,0,0,0,0), ~~ p_3 = (1,1,1,1,1)\} \subset \Z^5. \]
Let $f: X \to X$ be defined by $f(p_1)=f(p_2) = p_1$, $f(p_3)=p_2$.
Then $f$ is not $(c_5,c_5)$-continuous. 
\end{exl}

Despite its discontinuity, the function of Example~\ref{counter} was shown
in~\cite{BxSt18} to exemplify many different types of functions studied
in~\cite{EgeKaraca,Han16,Hossain-etal,Jain,JR17,JR18,Mishra-etal,Ege-etal,RaniJyo}. In the
current paper, this example is also used to show that functions of the type studied need
not be digitally continuous.

\section{Various compatibilities in \cite{DalalEtAl}}
\subsection{Equivalence of compatibilities}
In this section, we examine the equivalence of several different types of ``compatible''
functions discussed in~\cite{DalalEtAl}.

\begin{definition}
\label{DalalMaps}
{\rm ~\cite{DalalEtAl}}
Suppose $S$ and $T$ are self-maps on a digital metric space $(X,d,\kappa)$. Suppose
$\{x_n\}_{n=1}^{\infty}$ is a sequence in $X$ such that
\begin{equation}
\label{seqEqn}
lim_{n \to \infty} S(x_n) = lim_{n \to \infty} T(x_n) = t \mbox{ for some } t \in X.
\end{equation}
We have the following.
\begin{itemize}
\item $S$ and $T$ are called {\em compatible} if 
      $lim_{n \to \infty} d(S(T(x_n)), T(S(x_n))) = 0$ for all
      sequences $\{x_n\}_{n=1}^{\infty} \subset X$ that satisfy
      statement~(\ref{seqEqn}).
\item $S$ and $T$ are called {\em compatible of type (A)} if 
      $lim_{n \to \infty} d(S(T(x_n)), T(T(x_n))) = 0 = lim_{n \to \infty} d(T(S(x_n)), S(S(x_n)))$ for all
      sequences $\{x_n\}_{n=1}^{\infty} \subset X$ that satisfy
      statement~(\ref{seqEqn}).
\item $S$ and $T$ are called {\em compatible of type (P)} if
      $lim_{n \to \infty} d(S(S(x_n)), T(T(x_n))) = 0$ for all
      sequences $\{x_n\}_{n=1}^{\infty} \subset X$ that satisfy
      statement~(\ref{seqEqn}).
\end{itemize}
\end{definition}

Because digital metric spaces are typically discrete, these properties can 
be simplified as shown in the remainder of this section.

\begin{prop}
\label{DalCompatibleAareCompatible}
{\rm \cite{DalalEtAl}} Let $S$ and $T$ be compatible maps of type (A) on a digital metric
space $(X,d,\kappa)$. If one of $S$ and $T$ is continuous, then $S$ and $T$ are compatible.
\end{prop}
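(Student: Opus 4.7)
The plan is a direct triangle-inequality argument in the style of the classical proof for metric spaces; the digital structure plays no essential role in the general argument, though I would also point out that in the ``typically discrete'' setting mentioned immediately before the statement, the continuity hypothesis is in fact superfluous.

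Let $\{x_n\}$ be a sequence in $X$ with $\lim_{n\to\infty} S(x_n) = \lim_{n\to\infty} T(x_n) = t$. I would aim to show $\lim_{n\to\infty} d(S(T(x_n)), T(S(x_n))) = 0$ by splitting with a triangle inequality, choosing the split according to which of $S$ or $T$ is continuous.

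If $T$ is the continuous map, write
\[
d(S(T(x_n)), T(S(x_n))) \le d(S(T(x_n)), T(T(x_n))) + d(T(T(x_n)), T(S(x_n))).
\]
The first summand tends to $0$ by the type (A) hypothesis. For the second, continuity of $T$ applied to $T(x_n) \to t$ and to $S(x_n) \to t$ gives $T(T(x_n)) \to T(t)$ and $T(S(x_n)) \to T(t)$, so the summand tends to $0$. If instead $S$ is continuous, use the symmetric split
\[
d(S(T(x_n)), T(S(x_n))) \le d(S(T(x_n)), S(S(x_n))) + d(S(S(x_n)), T(S(x_n))).
\]
Now the second summand vanishes by the other half of the type (A) hypothesis, while continuity of $S$, applied to $T(x_n) \to t$ and $S(x_n) \to t$, makes the first summand vanish.

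I do not foresee any real obstacle: every ingredient is either part of the type (A) definition or a direct application of sequential continuity in the metric $d$. The one thing worth flagging, in the spirit of this paper, is that under the usual discreteness assumptions (finite $X$ or $\ell_p$ metric), the hypothesis $\lim S(x_n) = \lim T(x_n) = t$ forces $S(x_n) = T(x_n) = t$ for all sufficiently large $n$; type (A) then yields $d(S(t), T(t)) = 0$, hence $S(t) = T(t)$, after which $S(T(x_n)) = S(t) = T(t) = T(S(x_n))$ eventually, and the conclusion holds with no continuity assumption at all. I would include this as a remark to reinforce the theme of the section.
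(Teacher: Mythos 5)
Your proof is correct, and it is the standard metric-space argument: the two triangle-inequality splits are the right ones, and in each case one summand vanishes by the type (A) hypothesis while the other vanishes by sequential continuity of whichever of $S$, $T$ is assumed continuous. Note that the paper does not actually reprove this proposition; it quotes it from \cite{DalalEtAl} and only remarks that the continuity used in that source is the classical $\varepsilon$--$\delta$ (metric) continuity rather than digital continuity --- which is exactly the reading your proof adopts, so no conflict arises there. Your closing observation, that under the usual discreteness hypotheses (finite $X$ or an $\ell_p$ metric) the sequences involved are eventually constant and the continuity assumption becomes superfluous, is precisely the content of the paper's Theorem~\ref{compatEquivs}, which shows that compatibility, compatibility of type (A), and compatibility of type (P) are all equivalent in that setting; so that remark anticipates, rather than adds to, what the paper proves next. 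In short: your general argument supplies the proof the paper delegates to \cite{DalalEtAl}, and your remark reproduces the paper's own strengthening.
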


The proof given for Proposition~\ref{DalCompatibleAareCompatible} 
clarifies that the continuity expected of $S$ or $T$ is in the sense of
the classical ``$\varepsilon - \delta$
definition", not in the sense of digital continuity. However, it turns out
that this assumption is usually unnecessary, as shown below.

\begin{thm}
\label{compatEquivs}
Let $(X,d,\kappa)$ be a digital metric space, where either $X$ is finite 
or $d$ is an $\ell_p$ metric. Let $S$ and $T$ be self-maps on $X$. 
Then the following are equivalent.
\begin{itemize}
\item $S$ and $T$ are compatible.
\item $S$ and $T$ are compatible of type (A).
\item $S$ and $T$ are compatible of type (P).
\end{itemize}
\end{thm}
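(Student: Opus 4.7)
My plan is to exploit the fact that under the hypothesis (either $X$ finite or $d$ an $\ell_p$ metric), there is a constant $a > 0$ such that $d(x,y) > a$ for all distinct $x, y \in X$; this is the observation made in the Remark following Corollary~\ref{Han-Cauchy-cor}. From this, I get two crucial consequences that I would isolate first: (i) any convergent sequence in $X$ is eventually constant (and equal to its limit), and (ii) if $\{y_n\}, \{z_n\} \subset X$ satisfy $\lim_{n \to \infty} d(y_n, z_n) = 0$, then eventually $y_n = z_n$. Consequence (i) forces the sequence $\{x_n\}$ in statement~(\ref{seqEqn}) to satisfy $S(x_n) = T(x_n) = t$ for all sufficiently large $n$, and consequence (ii) lets me rewrite each of the three compatibility conditions as an eventual equality rather than a limit statement.

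The key reduction is then immediate: once $S(x_n) = T(x_n) = t$ for $n \ge n_0$, the four iterated values for $n \ge n_0$ become
\[
S(T(x_n)) = S(t), \quad T(S(x_n)) = T(t), \quad S(S(x_n)) = S(t), \quad T(T(x_n)) = T(t).
\]
Therefore compatibility becomes eventually $S(t) = T(t)$; compatibility of type (A) becomes eventually $S(t) = T(t)$ and $T(t) = S(t)$; and compatibility of type (P) becomes eventually $S(t) = T(t)$. Since all three conditions collapse to the single statement ``$S(t) = T(t)$ whenever a sequence $\{x_n\}$ satisfies~(\ref{seqEqn}) with limit $t$,'' they are mutually equivalent.

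I would then organize the write-up as: first quote/derive consequences (i) and (ii) from the uniform lower bound on distances between distinct points; second, observe the eventual equality $S(x_n) = T(x_n) = t$ and the consequent values of the four compositions; third, present the three rewritten conditions side-by-side and read off the equivalence. I do not anticipate a genuine obstacle; the only point that needs mild care is making sure the quantifiers work cleanly, namely that the statement ``for every sequence $\{x_n\}$ satisfying~(\ref{seqEqn})'' is the same universal quantifier in all three definitions, so that reducing the conclusion of each definition to ``$S(t) = T(t)$'' yields genuinely equivalent properties of the pair $(S,T)$ and not merely equivalent conditions on a particular sequence.
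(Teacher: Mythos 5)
Your proposal is correct, and it takes a cleaner route than the paper's own proof. Both arguments rest on the same discreteness observation (finite $X$ or an $\ell_p$ metric gives a uniform positive lower bound on distances between distinct points, so convergent sequences are eventually equal to their limits and vanishing distances force eventual equality). But the paper organizes the proof as a cycle of implications --- compatible $\Rightarrow$ type (P) $\Rightarrow$ type (A) $\Rightarrow$ compatible --- each step combining the eventual-constancy fact with a triangle-inequality estimate such as $d(S(S(x_n)), T(T(x_n))) \le d(S(S(x_n)),S(T(x_n))) + d(S(T(x_n)),T(S(x_n))) + d(T(S(x_n)),T(T(x_n)))$. You instead put all three definitions into a common normal form: once $S(x_n)=T(x_n)=t$ for large $n$, the four compositions freeze at $S(t)$ or $T(t)$, and each of the three conditions literally becomes ``$S(t)=T(t)$ for every sequence satisfying~(\ref{seqEqn}).'' This dispenses with the triangle inequality entirely, establishes all the equivalences simultaneously rather than cyclically, and has the added expository benefit of exhibiting exactly what all three compatibility notions collapse to under the discreteness hypothesis. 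Your closing remark about the universal quantifier over sequences being the same in all three definitions is the right point to be careful about, and it does go through.
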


\begin{proof}
Throughout this proof, let $\{x_n\}_{n=1}^{\infty} \subset X$ satisfy
statement~(\ref{seqEqn}).

Suppose $S$ and $T$ are compatible. Then $d(S(S(x_n)), T(T(x_n))) \le$
\[  d(S(S(x_n)),S(T(x_n))) + 
   d(S(T(x_n)),T(S(x_n))) + d(T(S(x_n)),T(T(x_n))). 
\]
By Corollary~\ref{Han-Cauchy-cor} and compatibility, the right side
of the latter inequality
\[ \to_{n \to \infty} d(S(t), S(t)) + 0 + d(T(t), T(t)) = 0.
\]
Thus, $S$ and $T$ are compatible of type (P).

Suppose $S$ and $T$ are compatible of type (P). Then, using Corollary~\ref{Han-Cauchy-cor},
\[ lim_{n=1}^{\infty} d(S(T(x_n)), T(T(x_n))) = lim_{n=1}^{\infty} d(S(t), T(T(x_n))) =
\]
\[ lim_{n=1}^{\infty} d(S(S(x_n)), T(T(x_n))) = \mbox{ (because compatible of type (P)) } 0.
\]
Similarly, $lim_{n=1}^{\infty} d(T(S(x_n)), S(S(x_n))) = 0$. Therefore, 
$S$ and $T$ are compatible of type (A).

If $S$ and $T$ are compatible of type (A), then,
using the triangle inequality and Definition~\ref{DalalMaps},
\[ d(S(T(x_n)), T(S(x_n))) \le d(S(T(x_n)), T(T(x_n))) + d(T(T(x_n))), T(S(x_n)))
\]
\[ \to_{n \to \infty} 0 + 0 = 0.
\]
Hence, $S$ and $T$ are compatible.
\end{proof}

\subsection{Compatible functions' common fixed points}
The assertions stated as Theorem~23 and Theorem~24 of~\cite{DalalEtAl}
are concerned with the existence
of a common fixed point of four self-maps of a digital metric space. However, these
assertions are incorrect. We give a counterexample below.

Stated as Theorem~23 of~\cite{DalalEtAl} is the following.
\begin{quote}
{\em Let $A,B,S$, and $T$ be self-maps on a complete digital metric space
$(X,d,\kappa)$ such that

(a) $S(X) \subset B(X)$ and $T(X) \subset A(X)$;

(b) the pairs $(A,S)$ and $(B,T)$ are compatible;

(c) one of $S,T,A$, and $B$ is continuous; and

(d) we have 
\[ F[d(A(x),B(y)), d(S(x),T(y)), d(A(x),S(x)),d(B(y),T(y)),\] 
\[ d(A(x),T(y)), d(B(y),S(x))] \le 0\]
   for all $x,y \in X$ {\rm [``$\le 0$'' was omitted in the statement of 
   the assertion, but the argument given as proof clarifies that this was 
   intended]}.

Then $A,B,S$, and $T$ have a unique common fixed point.
} 
\end{quote}

Stated as Theorem~24 of~\cite{DalalEtAl} is the following.
\begin{quote}
{\em Let $A,B,S$, and $T$ be self-maps on a complete digital metric space
$(X,d,\kappa)$ satisfying conditions (a), (c), and (d). If the pairs $(A,S)$ and
$(B,T)$ are compatible of type (A) or of type (P) 
then $A,B,S$, and $T$ have a unique common fixed point.
} 
\end{quote}
By Theorem~\ref{compatEquivs}, if $d$ is an $\ell_p$ metric then these
assertions are equivalent.
A counterexample to these assertions:

\begin{exl}
Let $X = \Z$, $d(x,y)= |x-y|$, $\kappa = c_1$, $A(x)=B(x)= x-1$, $S(x)=T(x)=x+1$,
$F(x_1,x_2,x_3,x_4,x_5,x_6) = 0$.
\end{exl}

\begin{proof}
That this is a counterexample to the assertion stated as Theorem~23 
of~\cite{DalalEtAl} is shown as follows. Clearly 
$S(X)=B(X)= \Z = T(X) = A(X)$. The pair $(A,S)$ is
compatible, since $A(S(x))=x=S(A(x))$ for 
all $x \in X$; similarly, $(B,T)$ is a compatible pair. All of $S,T,A$, 
and $B$ are continuous in both the ``$\varepsilon - \delta$'' sense 
and in the digital sense with respect to the $c_1$ adjacency. Trivially,
\[ F(d(Ax,By), d(Sx,Ty), d(Ax,Sx),d(By,Ty), d(Ax,Ty), d(By,Sx)) \le 0, \]
for all $x,y \in X$. However, none of $S,T,A$, and $B$ has a fixed point.
\end{proof}

\section{Expansive mappings in \cite{JR17b}}
The paper~\cite{JR17b} is concerned with fixed points for expansive maps on
digital metric spaces.

\begin{definition}
{\rm \cite{JR17b}}
\label{expansive}
Let $T$ be a self map on a complete metric space $(X,d)$ such that
$T$ is onto, and for some $k \ge 1$ and all $x,y \in X$, 
\begin{equation}
\label{expansiveIneq}
d(T(x),T(y)) \ge k \, d(x,y).
\end{equation}
Then $T$ is called an {\em expansive map}.
\end{definition}

\begin{remark}
In~{\rm \cite{JR18}}, the constant $k$ of~{\rm (\ref{expansiveIneq})}
was restricted to $k > 1$. Theorem~\ref{expansionPreservesDist} below
shows there is no such map if $X$ is finite.
\end{remark}

The following shows a limitation on the application of expansive maps in digital topology.
The result seems contrary to the spirit of Definition~\ref{expansive}.

\begin{thm}
\label{expansionPreservesDist}
If $T$ is an expansive map on a finite digital image $(X,d,\kappa)$, then for
all $x,y \in X$, $d(T(x),T(y)) = d(x,y)$.
\end{thm}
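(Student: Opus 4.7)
The plan is to exploit finiteness to force $T$ to be a bijection, and then to use a counting argument on the total pairwise distance.

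First I would observe that since $T$ is onto and $X$ is finite, $T$ must be a bijection of $X$ to itself. Thus the map $(x,y) \mapsto (T(x),T(y))$ is a bijection of $X \times X$ to itself, so
\[ \sum_{(x,y) \in X \times X} d(T(x),T(y)) = \sum_{(x,y) \in X \times X} d(x,y). \]

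Next I would apply the expansive inequality termwise and sum:
\[ \sum_{(x,y) \in X \times X} d(T(x),T(y)) \ge k \sum_{(x,y) \in X \times X} d(x,y). \]
Combining these with $k \ge 1$ yields $\sum d(x,y) \ge k \sum d(x,y) \ge \sum d(x,y)$. If $X$ is a singleton the conclusion is trivial; otherwise the common sum is strictly positive, so $k = 1$ and every term in the summed inequality $d(T(x),T(y)) \ge d(x,y)$ must be an equality, since a sum of nonnegative quantities $d(T(x),T(y)) - d(x,y) \ge 0$ vanishes only when each term vanishes.

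There is no real obstacle here; the only subtlety is remembering that ``onto'' plus finite forces injectivity, which is what lets the counting argument go through. I would not expect to need any property of $\kappa$ or of the metric beyond nonnegativity and the axiom that $d(x,y) = 0$ iff $x = y$, so the conclusion holds for any metric $d$ on any finite $X$, not just for $\ell_p$ metrics on lattice points.
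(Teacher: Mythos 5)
Your proof is correct, and it takes a genuinely different (and arguably cleaner) route than the paper's. The paper proceeds in two stages: it first cites an external result (Theorem~4.9 of the earlier paper by Boxer and Staecker) to rule out $k>1$ and reduce to $d(T(x),T(y)) \ge d(x,y)$, and then runs an extremal argument on the finitely many distance values $d_1 < \cdots < d_m$, taking the least index $j$ at which some pair's distance strictly increases and using surjectivity to produce a pair whose distance strictly decreases, contradicting the $k=1$ inequality. Your summation argument collapses both stages into one: the reindexing identity $\sum_{(x,y)} d(T(x),T(y)) = \sum_{(x,y)} d(x,y)$ (valid because onto plus finite forces $T$, hence the induced map on $X \times X$, to be a bijection) combined with the termwise inequality $d(T(x),T(y)) - d(x,y) \ge (k-1)\,d(x,y) \ge 0$ forces every term to vanish. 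This is self-contained, shorter, and recovers the impossibility of $k>1$ on a non-singleton $X$ as a byproduct rather than a prerequisite. Your closing observation is also accurate: like the paper's proof, your argument uses nothing about $\kappa$ and no special property of $d$ beyond the metric axioms, so the result holds for any metric on any finite set. The only stylistic note is that the deduction ``$k=1$'' is not actually needed for the conclusion --- the vanishing of each nonnegative term $d(T(x),T(y)) - d(x,y)$ already gives the equality --- but including it does no harm.
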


\begin{proof}
It is shown at Theorem~4.9 of~\cite{BxSt18} that $T$ cannot 
satisfy~(\ref{expansiveIneq}) for $k>1$. Thus, we have $k=1$, so 
\begin{equation}
\label{constIs1}
d(T(x),T(y)) \ge d(x,y) \mbox{ for all } x,y \in X.
\end{equation}

Since $X$ is finite, there exists a maximal finite set 
$\{d_i\}_{i=1}^m \in (0,\infty)$
such that $0 < d_1 < d_2 < \ldots < d_m$ and sets
\[ S_i = \{\{u,v\} \in X^2 \, | \, d(u,v) = d_i\} \neq \emptyset.
\]
Suppose that there exist $x,y \in X$ such that $d(T(x),T(y)) > d(x,y)$. Then
there exists $j$ such that
\[ j = \min\{i \in \{1,\ldots,m\} \, | \, d(T(x_0),T(y_0)) > d(x_0,y_0) \mbox{ for some }
             \{x_0,y_0\} \in S_i\}.
\]

Thus $\{ \{T(x),T(y)\} \, | \, \{x,y\} \in S_j \} \not \subset S_j$.
But $T$ is onto and $X$ is finite, so there exist
$x_1,y_1 \in X$ such that $\{x_1,y_1\} \not \in S_j$ and
$\{T(x_1),T(y_1)\} \in S_j$.
By our choice of $j$, there exists an index $k>j$ such that
$\{x_1,y_1\} \in S_k$. Therefore,
\[d(T(x_1), T(y_1)) = d_j < d_k = d(x_1,y_1), \]
a contradiction of~(\ref{constIs1}). This establishes the assertion.
\end{proof}

Even being an isomorphism need not make a self-map expansive, as shown
in the following.

\begin{exl}
\label{expansiveExl}
Let $X = \{p_0=(0,0), p_1=(1,0), p_2=(1,1)\} \subset \Z^2$. Let
$f: (X,c_2) \to (X,c_2)$ be the rotation defined by
\[ f(p_i) = p_{(i+1) \mod 3}.\]
Then it is easily seen that $f$ is a $(c_2,c_2)$-isomorphism. However, if we let $d$
be any $\ell_p$ metric, then by Theorem~\ref{expansionPreservesDist}, $f$ is not
an expansive mapping, since 
\[ d(p_1,p_2) = 1 \neq 2^{1/p} = d(p_2, p_0) = d(f(p_1),f(p_2)). \]
\end{exl}

\begin{definition}
\label{genAlphaPsi}
{\rm \cite{JR17b}}
Let $(X,d,\kappa)$ be a digital metric space and let $T: X \to X$ be a mapping. 
We say that $T$ is a {\em generalised $\alpha$-$\psi$-expansive mapping} if there exist
functions $\alpha: X \times X \to [0,\infty)$ and $\psi \in \Psi$ such that 
for all $x,y \in X$ we have
\begin{equation}
\label{generalizedExpansive}
\psi(d(T(x),T(y))) \ge \alpha(x,y) M(x,y) 
\end{equation}
where
\[ M(x,y) = \max \{d(x,y), \frac{d(x,T(x)) + d(y, T(y))}{2},
              \frac{d(x,T(y)) + d(y, T(x))}{2} \}.
\]
\end{definition}

\begin{remark}
Any function $T: X \to X$ on a digital metric space is a generalized
$\alpha$-$\psi$-expansive mapping if $\alpha$ is the constant function 
with value $0$. Therefore, a generalised $\alpha$-$\psi$-expansive 
mapping need not be digitally continuous.
\end{remark}

If one desires an example of a discontinuous generalised 
$\alpha$-$\psi$-expansive mapping for which $\alpha$ is not the constant 
function with value $0$, consider the following. This example 
also shows that the status of a map as an expansive
map depends on the metric used, and that an expansive map need not be
digitally continuous.

\begin{exl}
\label{expansiveNonCont}
Let $X = \{p_0, p_1, p_2\} \subset \Z^2$, where
$p_0 = (0,0)$, $p_1 = (1,1)$, $p_2 = (2,0)$. Let
$T: X \to X$ be the circular rotation $T(p_i) = p_{(i+1) \mod 3}$.
Then 
\begin{itemize}
\item $T$ is an expansive map with respect to the Manhattan metric, but
not with respect to the Euclidean metric;
\item $T$ is not $(c_2,c_2)$-continuous;
\item $T$ is a generalised $\alpha$-$\psi$-expansive mapping for
$\psi(t)=t/2$ and $\alpha(x,y) = 1/3$.
\end{itemize}
\end{exl}

\begin{proof}
With respect to the Manhattan metric,
\begin{equation}
\label{dist2}
i \neq j \mbox{ implies } d(p_i,p_j)=2.
\end{equation}
Since $T$ is a bijection, we have
$d(T(p_i),T(p_j)) = d(p_i,p_j)$, so $T$ is an expansive map.

With respect to the Euclidean metric, 
\[d(p_0,p_2) = 2 > \sqrt{2} = d(p_1,p_0) = d(T(p_0), T(p_2)),
\]
so $T$ is not an expansive map.

Since
$p_1 \adj_{c_2} p_2$ but $T(p_1) = p_2$ and $T(p_2) = p_0$ are neither
equal nor $c_2$-adjacent, $T$ is not $c_2$-continuous.

Using the Manhattan metric, we have from~(\ref{dist2}) 
that $i \neq j$ implies
\[ d(f(x_i),f(x_j)) = 2 =d(x_i, x_j).\]
Therefore, from Definition~\ref{genAlphaPsi}  we have
$M(x_i,x_i)=0$ and $i \neq j$ implies $M(x_i,x_j) = 2$.
Then one sees easily that $T$ is a generalised $\alpha$-$\psi$-expansive
mapping for $\psi(t)=t/2$ and $\alpha(x,y) = 1/3$.
\end{proof}

The assertion that appears as Theorem~3.4 of~\cite{JR17b} can be corrected and
improved as discussed below. The assertion is
\begin{quote}
{\em Let $(X,d,\kappa)$ be a complete digital metric space and 
let $T: X \to X$ be a bijective and generalised $\alpha$-$\psi$-expansive mapping
that satisfies the following conditions:
\begin{enumerate}
\item $T^{-1}$ is $\alpha$-admissible;
\item there exists $x_0 \in X$ such that $\alpha(x_0,T^{-1}(x_0)) \ge 1$; and
\item $T$ is digitally continuous.
\end{enumerate}
Then $T$ has a fixed point.
} 
\end{quote}

\begin{remark}
We observe the following.
\begin{itemize}
\item The argument given for the assertion above in~\cite{JR17b} confuses
      topological continuity (the $\varepsilon$ - $\delta$ definition) and
      digital continuity (preservation of digital connectedness). 
      Therefore, item~3 above should be ``$T$ is topologically 
      continuous". 
\item The assumption of continuity can be omitted in common conditions, as shown below.
\item In the first line of the proof, ``$x_n=Tx_{n+1}$" is correct, but perhaps could
      be more clearly expressed as ``$x_{n+1}=T^{-1}x_n$ for $n>0$".
\item In statement~(4) of the proof, ``$d(Tx_{n-1},Tx_n)$" should be ``$d(x_{n-1},x_n)$".
\item In statement~(5) of the proof, the second ``$=$" should be ``$\ge$"
      and the two instances of ``$\le$" should each be ``$\ge$".
\item On the left side of the inequality in statement~(6) of the proof, the
      index ``$n+1$" should be ``$n-1$". Thus, the inequality should appear as
      \[ \psi(d(x_{n-1},x_n)) \ge \max\{d(x_n, x_{n-1}), d(x_n,x_{n+1})\}.
      \]
\end{itemize}
\end{remark}

The following is a corrected, somewhat modified, version of the
assertion above. Note we do not require $T$ to be continuous.

\begin{thm}
\label{genAlPsFixed}
Let $(X,d,\kappa)$ be a complete digital metric space and 
let $T: X \to X$ be a bijective and generalized $\alpha$-$\psi$-expansive mapping
that satisfies the following conditions:
\begin{enumerate}
\item $T^{-1}$ is $\alpha$-admissible;
\item there exists $x_0 \in X$ such that $\alpha(x_0,T^{-1}(x_0)) \ge 1$.
\end{enumerate}
Assume also that either $X$ is finite or $d$ is an $\ell_p$ metric.
Then $T$ has a fixed point.
\end{thm}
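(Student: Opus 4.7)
The plan is to build a Picard-style iteration of $T^{-1}$, show the resulting orbit is a Cauchy sequence, and invoke the finite-or-$\ell_p$ hypothesis (through Corollary~\ref{Han-Cauchy-cor}) to force it to stabilize at a fixed point of $T$. Concretely, starting from the $x_0$ provided by hypothesis~2, define $x_{n+1} = T^{-1}(x_n)$ for $n \ge 0$. Since $\alpha(x_0, x_1) = \alpha(x_0, T^{-1}(x_0)) \ge 1$ and $T^{-1}$ is $\alpha$-admissible, a routine induction yields $\alpha(x_n, x_{n+1}) \ge 1$ for every $n \ge 0$.

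Next I would apply the generalized $\alpha$-$\psi$-expansive inequality~(\ref{generalizedExpansive}) with the pair $(x,y) = (x_n, x_{n+1})$ for $n \ge 1$. Because $T(x_n) = x_{n-1}$ and $T(x_{n+1}) = x_n$, and because $M(x_n, x_{n+1}) \ge d(x_n, x_{n+1})$ by definition, this gives
\[ \psi(d(x_{n-1}, x_n)) \;\ge\; \alpha(x_n, x_{n+1})\, M(x_n, x_{n+1}) \;\ge\; d(x_n, x_{n+1}). \]
If some $x_{n-1} = x_n$ occurs, then $T^{-1}(x_{n-1}) = x_{n-1}$, so $T(x_{n-1}) = x_{n-1}$ and we are done; otherwise every $d(x_{n-1},x_n)$ is positive. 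Since $\psi$ is nondecreasing, a short induction then gives $d(x_n, x_{n+1}) \le \psi^n(d(x_0, x_1))$ for all $n \ge 0$, and the defining summability condition $\sum_{n=1}^\infty \psi^n(t) < \infty$ of~$\Psi$ makes the triangle-inequality estimate $d(x_n, x_m) \le \sum_{k=n}^{m-1} \psi^k(d(x_0, x_1))$ tend to $0$ as $n \to \infty$. Thus $\{x_n\}$ is Cauchy in $X$.

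To finish, I would invoke Theorem~\ref{Han-Cauchy} together with the remark following Corollary~\ref{Han-Cauchy-cor}: because $X$ is finite or $d$ is an $\ell_p$ metric, every Cauchy sequence is eventually constant, so there exists $N$ with $x_N = x_{N+1} = T^{-1}(x_N)$, i.e.\ $T(x_N) = x_N$. The main subtlety is in the second step: the expansive inequality must be evaluated at the ordered pair $(x_n, x_{n+1})$, not $(x_{n+1}, x_n)$, because $\alpha$ is not assumed symmetric and the admissibility chain only propagates $\alpha(x_n,x_{n+1}) \ge 1$ in that direction; one also has to check that the three terms inside the max defining $M(x_n, x_{n+1})$ do not undercut $d(x_n, x_{n+1})$, which is immediate since $d(x_n,x_{n+1})$ itself is one of those terms. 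This is what lets Proposition~\ref{psi1UpperBound} act as a contraction in the correct direction despite $T$ being expansive.
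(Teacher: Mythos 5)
Your proposal is correct and follows essentially the same route as the paper's proof: the Picard iteration $x_{n+1}=T^{-1}(x_n)$, the induction on $\alpha$-admissibility to get $\alpha(x_n,x_{n+1})\ge 1$, the expansive inequality applied at $(x_n,x_{n+1})$ to obtain $d(x_n,x_{n+1})\le\psi(d(x_{n-1},x_n))$, and eventual constancy from the finite-or-$\ell_p$ hypothesis. Your version streamlines two steps harmlessly --- you bound $M(x_n,x_{n+1})\ge d(x_n,x_{n+1})$ directly from the definition of $M$ instead of computing all three terms of the max as the paper does, and you deduce $\psi^n(d(x_0,x_1))\to 0$ from the summability condition in Definition~\ref{psi1Family} rather than from Proposition~\ref{psi1UpperBound} --- but the argument is the same.
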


\begin{proof}
Our argument is based on its analog in~\cite{JR17b}.

We have hypothesized the existence of $x_0 \in X$ such that
$\alpha(x_0,T^{-1}(x_0)) \ge 1$. Define the sequence
$\{x_n\}_{n=0}^{\infty} \in X$ by $x_{n+1}=T^{-1}(x_n)$ for $n>0$. If
$x_{m+1}=x_m$ for some $m$, then $x_m$ is a fixed point of $T^{-1}$,
hence of $T$. Otherwise, $x_{n+1} \neq x_n$ for all $n$.

Since $T^{-1}$ is $\alpha$-admissible, we have that
$\alpha(x_0, x_1) = \alpha(x_0, T^{-1}(x_0)) \ge 1$ implies
$\alpha(x_1, x_2) = \alpha(T^{-1}(x_0), T^{-1}(x_1)) \ge 1$, and, by induction,
\[ \alpha(x_n, x_{n+1}) = \alpha(T^{-1}(x_{n-1}), T^{-1}(x_n)) \ge 1 \mbox{ for all } n.
\]
From Definition~\ref{genAlphaPsi}, we have
\[ \psi(d(x_{n-1},x_n)) = \psi(d(T(x_n),T(x_{n+1})) \ge \alpha(x_n,x_{n+1}) M(x_n,x_{n+1})
 \]
 \begin{equation}
 \label{psi-d-statement}
 \ge M(x_n,x_{n+1})
\end{equation}
where
\[ M(x_n,x_{n+1}) =  \max \{d(x_n,x_{n+1}), \frac{d(x_n,T(x_n)) + d(x_{n+1}, T(x_{n+1}))}{2}, \]
\[       ~~~~~~~~~~  \frac{d(x_n,T(x_{n+1})) + d(T(x_n),x_{n+1})}{2} \}
\]
\[ = \max\{d(x_n,x_{n+1}), \frac{d(x_n,x_{n-1}) + d(x_{n+1}, x_n)}{2}, \]
\[  ~~~~~~~~~~  \frac{d(x_n,x_n)) + d(x_{n-1},x_{n+1})}{2}\}
\]
\[ = \max\{d(x_n,x_{n+1}), \frac{d(x_n,x_{n-1}) + d(x_{n+1}, x_n)}{2},
   \frac{d(x_{n-1},x_{n+1})}{2}\}.
\]
Since by the triangle inequality, we have 
\[ \frac{d(x_{n-1},x_{n+1})}{2} \le \frac{d(x_n,x_{n-1}) + d(x_{n+1},x_n)}{2},
\]
it follows that
\[ M(x_n,x_{n+1}) \ge \max\{d(x_n,x_{n+1}), \frac{d(x_n,x_{n-1}) + d(x_{n+1}, x_n)}{2} \}
\]
\begin{equation}
\label{MLowerBound}
\ge \max\{d(x_n,x_{n+1}), d(x_n,x_{n-1}) \}.
\end{equation}
From inequalities~(\ref{psi-d-statement}) and~(\ref{MLowerBound}) we have
\[ \psi(d(x_{n-1},x_n)) \ge \max\{d(x_n,x_{n+1}), d(x_n,x_{n-1})\}.
\]
It follows from Proposition~\ref{psi1UpperBound} that
\begin{equation}
\label{maxDist}
\max\{d(x_n,x_{n+1}), d(x_n,x_{n-1}) \} = d(x_{n-1},x_n), \mbox{ i.e., }
     d(x_n,x_{n+1}) \le d(x_n,x_{n-1}).
\end{equation}
Applying statements~(\ref{psi-d-statement}),~(\ref{MLowerBound}),
and~(\ref{maxDist}), and using Proposition~\ref{psi1UpperBound}, gives
\begin{equation}
\label{psiAndNoPsi}
    d(x_n,x_{n+1}) \le \psi(d(x_{n-1},x_n)) \mbox{ for all } n \in \N.
\end{equation}
An easy induction based on~(\ref{psiAndNoPsi}) yields that
\[ d(x_n,x_{n+1}) \le \psi^n(d(x_0,x_1)) \mbox{ for all } n \in \N.
\]
By Proposition~\ref{psi1UpperBound}, it follows that 
$lim_{n \to \infty} d(x_n,x_{n+1}) = 0$. From Corollary~\ref{Han-Cauchy-cor},
we conclude that for large $n$ we have $T(x_{n+1}) = x_n = x_{n+1}$. Thus,
$x_{n+1}$ is a fixed point of $T$.
\end{proof}

Theorem~3.5 of~\cite{JR17b} is concerned with the existence of a fixed point for a
generalised $\alpha$-$\psi$-expansive mapping
satisfying conditions somewhat like those of Theorem~\ref{genAlPsFixed}.
The assertion is incorrectly stated (corrections noted below) as follows.
\begin{quote}
{\em Let $(X,d,\kappa)$ be a complete digital metric space. Suppose
$T: X \to X$ is a generalized $\alpha$-$\psi$ expansive mapping such that
\begin{enumerate}
\item $T^{-1}$ is $\alpha$-admissible;
\item there exists $x_0 \in X$ such that $\alpha(x_0, T^{-1}(x_0)) \ge 1$; and
\item if $\{x_n\}_{n=0}^{\infty} \subset X$ such that $\alpha(x_n,x_{n+1}) \ge 1$ 
for all $n$ and $\{x_n\}_{n=0}^{\infty}$ is digitally convergent to $x' \in X$, then 
$\alpha(T^{-1}(x_n),T^{-1}(x)) \ge 1$ for all $n$.
\end{enumerate}
Then $T$ has a fixed point.
} 
\end{quote}

\begin{remark}
Theorem~3.5 of~{\rm \cite{JR17b}} can be improved as follows.
\begin{itemize}
\item Since the existence of the function $T^{-1}$ is assumed, it should be stated
      that $T$ is assumed to be a bijection.
\item The term ``digitally convergent" is undefined. What the proof actually uses is
      metric convergence.
\item If we add the hypothesis that $X$ is finite or $d$ is an $\ell_p$ metric, then
      the assertion, as amended by these observations, follows from our 
      Theorem~\ref{genAlPsFixed}.
\end{itemize}
\end{remark}

\begin{remark}
Examples~3.6 and~3.7 of{\rm~\cite{JR17b}} claim $[0,\infty)$ as 
a digital metric space. Clearly, it is not.
\end{remark}

\begin{remark}
In the proof of Theorem~3.8 of~{\rm\cite{JR17b}}, the inequality
\[ d(u,T^n v) \le \psi(d(u,v)) \mbox{ for all } n=1,2,3, \ldots
\]
should be
\[ d(u,T^n v) \le \psi^n(d(u,v)) \mbox{ for all } n=1,2,3, \ldots
\]
\end{remark}

\section{Common fixed point assertions in~\cite{RaniJyo}}
The paper~\cite{RaniJyo} is concerned with common fixed points of pairs of
self-maps on digital metric spaces.

\begin{thm}
{\rm ~\cite{RaniJyo}} 
\label{RaniJyoCommonFP}
Let $T$ be a continuous mapping of a complete digital metric space $(X,d,\kappa)$
into itself. Then $T$ has a fixed point if and only if there exists $\alpha \in (0,1)$
and a function $S: X \to X$ that commutes with $T$ such that
\begin{equation}
\label{RJcondition}
S(X) \subset T(X) \mbox{ and } d(S(x), S(y)) \le \alpha d(T(x),T(y))
\mbox{ for all } x,y \in X.
\end{equation}
\end{thm}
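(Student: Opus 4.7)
The plan is to treat the two directions separately. The forward direction is essentially vacuous: if $T(p)=p$ for some $p\in X$, take $S$ to be the constant map $S(x)\equiv p$. Then $S\circ T = T\circ S$ (both are identically $p$, using $T(p)=p$), $S(X)=\{p\}\subset T(X)$, and $d(S(x),S(y))=0\le \alpha\,d(T(x),T(y))$ holds for every $\alpha\in(0,1)$.

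For the converse, I would use the standard Jungck iteration. Pick $x_0\in X$ arbitrarily; since $S(X)\subset T(X)$, inductively choose $x_{n+1}$ with $T(x_{n+1})=S(x_n)$. Writing $y_n:=T(x_n)=S(x_{n-1})$ for $n\ge 1$, the contractive hypothesis gives
\[
 d(y_{n+1},y_n)=d(S(x_n),S(x_{n-1}))\le \alpha\, d(T(x_n),T(x_{n-1}))=\alpha\,d(y_n,y_{n-1}),
\]
so $d(y_{n+1},y_n)\le \alpha^n d(y_1,y_0)$ and $\{y_n\}$ is Cauchy by the usual telescoping bound $d(y_m,y_n)\le \frac{\alpha^n}{1-\alpha}d(y_1,y_0)$. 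By completeness, $y_n\to z$ for some $z\in X$.

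To extract a fixed point I would combine continuity of $T$ with commutativity. Continuity yields $T(y_n)\to T(z)$; commutativity gives $T(y_n)=T(S(x_{n-1}))=S(T(x_{n-1}))=S(y_{n-1})$, so $S(y_{n-1})\to T(z)$. The contractive inequality $d(S(y_{n-1}),S(z))\le \alpha\,d(T(y_{n-1}),T(z))$ forces $S(y_{n-1})\to S(z)$ as well, and uniqueness of limits gives $S(z)=T(z)$. Write $w:=S(z)=T(z)$; commutativity then yields $S(w)=T(w)$, and applying the contractive hypothesis to the pair $(z,w)$ gives
\[
 d(w,T(w))=d(S(z),S(w))\le \alpha\,d(T(z),T(w))=\alpha\,d(w,T(w)),
\]
which, since $\alpha<1$, forces $T(w)=w$.

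The main obstacle, and the point most worth flagging given the paper's critical theme, is the meaning of \emph{continuous} in the statement: the argument above depends on classical $\varepsilon$-$\delta$ continuity of $T$ to push $T$ through the metric limit $y_n\to z$, and digital continuity in the sense of Definition~\ref{digitalContinuity} would not suffice. Moreover, under the mild hypothesis of Corollary~\ref{Han-Cauchy-cor} (finiteness of $X$, or $d$ an $\ell_p$ metric), the Cauchy sequence $\{y_n\}$ is eventually constant, so the continuity hypothesis becomes entirely redundant and the metric-limit step collapses to equality for all sufficiently large $n$.
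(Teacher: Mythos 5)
Your argument is correct as a proof of the statement under the reading that ``continuous'' means metric ($\varepsilon$--$\delta$) continuity, but note that the paper does not actually prove Theorem~\ref{RaniJyoCommonFP} itself: it quotes it from~\cite{RaniJyo} and instead proves the modified Theorem~\ref{myCommonFP}, which drops the continuity and completeness hypotheses and assumes instead that $X$ is finite or $d$ is an $\ell_p$ metric. Your forward direction (constant $S$ at a fixed point) is identical to the paper's, and your converse is the classical Jungck iteration: the same recursion $T(x_{n+1})=S(x_n)$ and the same endgame (coincidence point $\Rightarrow$ common fixed point via commutativity and the contractive inequality). The genuine divergence is in the middle: you use the telescoping bound, completeness, and continuity of $T$ to pass to the limit $z$ and obtain $S(z)=T(z)$, whereas the paper invokes Theorem~\ref{Han-Cauchy} to conclude that $T(x_{n+1})=T(x_n)=t$ for all large $n$, so the ``limit'' is attained exactly and continuity is never needed. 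Your approach buys the theorem in the generality actually stated (arbitrary complete digital metric space); the paper's buys the elimination of the continuity hypothesis, which is the point of its critique. Your closing paragraph correctly identifies both the continuity ambiguity and the collapse to eventual constancy, so you have in effect reconstructed both arguments; the only thing to flag is that the redundancy of continuity is a theorem of the paper (Theorem~\ref{myCommonFP}) rather than a side remark, and that without the finiteness/$\ell_p$ hypothesis the eventual-constancy shortcut is not available (cf.\ the example of a non-constant Cauchy sequence with the metric $d(i,j)=|1/i-1/j|$).
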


\begin{remark}
Let $S$ and $T$ be as in Theorem~\ref{RaniJyoCommonFP}, where 
``continuous" is interpreted as $(c_u,c_u)$-continuous, $X$ is 
$c_u$-connected, and $0< \alpha < \frac{1}{u^{1/p}}$.
Then~$S$ must be a constant function.
\end{remark}

\begin{proof}
Let $x \adj_{c_u} x'$ in $X$. Since $T$ is $(c_u,c_u)$-continuous,
either $T(x)=T(x')$ or $T(x) \adj_{c_u} T(x')$, so $d(T(x),T(x')) \le u^{1/p}$. By the inequality~(\ref{RJcondition}),
$d(S(x),S(x')) \le  \alpha d(T(x),T(x')) < 1$. Since $d$ is an $\ell_p$ metric, $d(S(x),S(x'))=0$, so
$S(x) = S(x')$. It follows from the $c_u$-connectedness of $X$ that $S$ is constant.
\end{proof}

Below, we show that if we assume common conditions, the requirement that
$T$ be continuous in Theorem~\ref{RaniJyoCommonFP} is unnecessary. Our proof is 
similar to its analog in~\cite{RaniJyo}.

\begin{thm}
\label{myCommonFP}
Let $T$ be a mapping of a digital metric space $(X,d,\kappa)$ 
into itself, where $X$ is finite or $d$ is an $\ell_p$ metric.
Then $T$ has a fixed point if and only if there exists $\alpha \in (0,1)$
and a function $S: X \to X$ that commutes with $T$ such that
\begin{equation}
\label{myCopyRJcondition}
S(X) \subset T(X) \mbox{ and } d(S(x), S(y)) \le \alpha d(T(x),T(y))
\mbox{ for all } x,y \in X.
\end{equation}
\end{thm}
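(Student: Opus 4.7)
The plan is to prove the biconditional in the standard Jungck-style manner, using Corollary~\ref{Han-Cauchy-cor} in place of classical metric completeness. For the forward direction, suppose $T$ has a fixed point $p$; I would define $S$ to be the constant function with value $p$. Then $S(X)=\{p\} \subset T(X)$ since $p=T(p)$, the maps $S$ and $T$ commute (both $S\circ T$ and $T\circ S$ are the constant function $p$), and $d(S(x),S(y))=0\le \alpha d(T(x),T(y))$ trivially for any $\alpha\in(0,1)$.

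For the reverse direction, I would follow the classical Jungck argument. Pick any $x_0 \in X$, and use the inclusion $S(X)\subset T(X)$ to choose, inductively, a sequence $\{x_n\}$ such that $T(x_{n+1})=S(x_n)$ for all $n\ge 0$. Applying the contractive-type hypothesis~(\ref{myCopyRJcondition}) iteratively yields
\[
d(T(x_n),T(x_{n+1})) = d(S(x_{n-1}),S(x_n)) \le \alpha\, d(T(x_{n-1}),T(x_n)) \le \alpha^n d(T(x_0),T(x_1)).
\]
A routine geometric-series estimate then shows $\{T(x_n)\}$ is a Cauchy sequence in $X$, and by Corollary~\ref{Han-Cauchy-cor} (whose hypotheses are met since $X$ is finite or $d$ is an $\ell_p$ metric) this sequence is eventually constant: there exists $N$ and $z \in X$ with $T(x_n)=z$ for all $n\ge N$. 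Correspondingly, $S(x_n)=T(x_{n+1})=z$ for $n \ge N$.

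Finally, I would use commutativity and the contraction to pin down $z$ as a fixed point of $T$. For $n\ge N$, commutativity gives $S(z)=S(T(x_n))=T(S(x_n))=T(z)$. Applying~(\ref{myCopyRJcondition}) with $y=x_n$ for $n\ge N$ yields
\[
d(S(z),z) = d(S(z),S(x_n)) \le \alpha\, d(T(z),T(x_n)) = \alpha\, d(S(z),z),
\]
and since $\alpha<1$ this forces $d(S(z),z)=0$, so $S(z)=z$ and hence $T(z)=S(z)=z$. The main substantive step is the Cauchy-then-eventually-constant step, but that is handled cleanly by Corollary~\ref{Han-Cauchy-cor} under the stated hypotheses; once it is in hand, everything else is bookkeeping. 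The one subtle point to be careful about is the indexing in the recursive definition of $\{x_n\}$, since the proof crucially uses that $S(x_n)$ and $T(x_{n+1})$ are the same element in order to identify the eventual value of $S(x_n)$ with the eventual value of $T(x_n)$.
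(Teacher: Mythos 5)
Your proposal is correct and follows essentially the same route as the paper: the constant map for the forward direction, and the Jungck-style sequence $T(x_{n+1})=S(x_n)$ with the geometric decay and Corollary~\ref{Han-Cauchy-cor} forcing eventual constancy for the converse. The only (harmless) difference is in the endgame: you show directly that the eventual value $z$ satisfies $S(z)=z=T(z)$, whereas the paper first derives $S(t)=T(t)$ and then exhibits $S(t)$ as the common fixed point; both arguments are valid.
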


\begin{proof}
Suppose $T$ has a fixed point, say, $T(a)=a$ for some $a \in X$. Let
$S: X \to X$ be the constant function $S(x)=a$. Then clearly $S \circ T = T \circ S$
and the condition~(\ref{myCopyRJcondition}) is satisfied.

Suppose there exist a function $S: X \to X$ and $\alpha \in (0,1)$
such that $S \circ T = T \circ S$
and the condition~(\ref{myCopyRJcondition}) is satisfied. Let $x_0 \in X$. Since
$S(X) \subset T(X)$, there exists $x_1 \in X$ such that $T(x_1) = S(x_0)$, and,
inductively, for all $n \in \N$ there exists $x_n \in X$ such that
$T(x_n) = S(x_{n-1})$. It follows from (\ref{myCopyRJcondition}) that
\[ d(T(x_{n+1}),T(x_n)) = d(S(x_n),S(x_{n-1})) \le \alpha d(T(x_n), T(x_{n-1})).
\]
An easy induction yields that
\[ d(T(x_{n+1}),T(x_n)) \le \alpha^n d(T(x_1), T(x_0)).
\]
Since the right side of the latter inequality tends to 0 as $n \to \infty$, it follows
from Theorem~\ref{Han-Cauchy} that for sufficiently large $n$, $T(x_{n+1})=T(x_n)=t$ for
some $t \in X$. But $T(x_{n+1})=S(x_n)$, so for sufficiently large $n$, $S(x_n)=t$, and
since $S$ and $T$ commute,
\begin{equation}
\label{SandTCoincide}
T(t) = T(T(x_n)) = T(S(x_n)) = S(T(x_n)) = S(t).
\end{equation}
Thus, 
\[ T(T(t)) = T(S(t)) = S(T(t)),
\] so
\[ d(S(t), S(S(t)) \le \alpha d(T(t),T(S(t))) = \alpha d(S(t), S(S(t))), \mbox{ or}
\]
$0 \le (\alpha - 1) d(S(t), S(S(t)))$. Since the factor $\alpha - 1 < 0$, we must have
$d(S(t), S(S(t))) = 0$, so $S(t) = S(S(t))$. From~(\ref{SandTCoincide}) it follows
that $S(t) = S(S(t)) = S(T(t)) = T(S(t))$, so $S(t)$ is a common fixed point of 
$S$ and $T$.

Suppose $x$ and $y$ are common fixed points of $S$ and $T$. Then
\[ d(x,y) = d(S(x),S(y)) \le \alpha d(T(x), T(y)) = \alpha d(x,y).
\]
Since $0 < \alpha < 1$, we must have $d(x,y) = 0$, so $x=y$.
\end{proof}

Similarly, under common circumstances we can omit the 
assumption of continuity that is in the version of the 
following that appears in~\cite{RaniJyo} .

\begin{cor}
Let $T$ and $S$ be commuting mappings of a complete digital metric space $(X,d,\kappa)$
into itself, where $d$ is an $\ell_p$ metric.
Suppose that $S(X) \subset T(X)$. If there exists $\alpha \in (0,1)$
and a positive integer $k$  such that $d(S^k(x),S^k(y)) \le \alpha d(T(x),T(y))$ 
for all  $x,y \in X$, then $T$ and $S$ have a common fixed point.
\end{cor}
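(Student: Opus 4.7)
The plan is to apply Theorem~\ref{myCommonFP} not to the pair $(S,T)$ directly, but to the pair $(S^k, T)$. First I would verify the three hypotheses of Theorem~\ref{myCommonFP} for this substituted pair. Commutativity of $S^k$ and $T$ follows from $S \circ T = T \circ S$ by an easy induction on $k$. The inclusion $S^k(X) \subset T(X)$ follows from $S(X)\subset T(X)$ by peeling off one copy of $S$: since $S^{k-1}(X)\subset X$, we have $S^k(X) = S(S^{k-1}(X)) \subset S(X) \subset T(X)$. The contraction-like inequality $d(S^k(x), S^k(y)) \le \alpha\, d(T(x), T(y))$ is assumed outright, and the hypothesis that $d$ is an $\ell_p$ metric is exactly what Theorem~\ref{myCommonFP} needs. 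Applying that theorem then yields a common fixed point $w \in X$ of $(S^k, T)$, i.e., $S^k(w) = w$ and $T(w) = w$.

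The second step is to promote $w$ from a fixed point of $S^k$ to a fixed point of $S$. Using commutativity of $S$ and $T$, I would compute $T(S(w)) = S(T(w)) = S(w)$ and $S^k(S(w)) = S(S^k(w)) = S(w)$, which shows that $S(w)$ is another common fixed point of $(S^k,T)$. I would then invoke the uniqueness-of-common-fixed-point argument that appears at the end of the proof of Theorem~\ref{myCommonFP}, applied to the pair $(S^k, T)$: for any two common fixed points $x, y$,
\[ d(x,y) = d(S^k(x), S^k(y)) \le \alpha\, d(T(x), T(y)) = \alpha\, d(x,y), \]
and $\alpha < 1$ forces $x = y$. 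Applied to $w$ and $S(w)$ this gives $S(w) = w$, so $w$ is a common fixed point of $S$ and $T$.

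I do not expect any serious obstacle, since every piece reduces to Theorem~\ref{myCommonFP}. The only small decision point is recognizing that one should apply Theorem~\ref{myCommonFP} to $(S^k,T)$ rather than trying to adapt its iterative construction $T(x_n) = S(x_{n-1})$ by hand with $S^k$ on one side; the latter would entangle the two maps and require redundant bookkeeping, whereas the substitution reduces the corollary cleanly to an already-proved theorem plus a two-line commutativity-and-uniqueness argument.
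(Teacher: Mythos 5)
Your proposal is correct and matches the paper's own proof essentially step for step: both apply Theorem~\ref{myCommonFP} to the pair $(S^k,T)$, observe that $S(w)$ is again a common fixed point of $(S^k,T)$ by commutativity, and conclude $S(w)=w$ from the uniqueness of that common fixed point. Your only deviation is that you spell out the uniqueness inequality explicitly rather than citing it from the end of the proof of Theorem~\ref{myCommonFP}, which is a harmless (arguably clarifying) difference.
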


\begin{proof} Our proof is much like that of its analog in~\cite{RaniJyo}. 
Since $S$ and $T$ commute, $S^k$ and $T$ commute. Further, 
$S^k(X) \subset S(X) \subset T(X)$. Therefore, we can apply 
Theorem~\ref{myCommonFP} to the maps $S^k$ and $T$ to
conclude that there is a unique $a \in X$ that is a common fixed point of 
$S^k$ and $T$, i.e., $S^k(a) = T(a) = a$. Therefore,
\[ T(S(a)) = S(T(a)) = S(a) = S(S^k(a)) = S^k(S(a)),
\]
so $S(a)$ is a common fixed point of $T$ and $S^k$. But $a$ is the unique common fixed
point of $T$ and $S^k$, so $a=S(a)$.
\end{proof}

\begin{remark}
The assertion given as Corollary~3.2.5 of~{\rm \cite{RaniJyo}} has inequalities reversed.
\begin{itemize}
\item ``$K \ldots > 1$" should be $0 < K < 1$".
\item ``$d(S^n(x),S^n(y)) \ge Kd(x,y)$" should be ``$d(S^n(x),S^n(y)) \le Kd(x,y)$"
\end{itemize}
Thus, in order for the assertion to be a corollary of the preceding Theorem~3.2.4, 
the former should be stated as
\begin{quote}
Corollary. Let $n$ be a positive integer and let $0 < K < 1$. If 
$S$ is a self-map of a digital metric space $(X,d,\kappa)$ such that 
$d(S^n(x),S^n(y)) \le Kd(x,y)$ for all $x,y \in X$, then $S$ has a 
unique fixed point.
\end{quote}
\end{remark}

\begin{remark}
The assertion at Example~3.3.8 of~{\rm \cite{RaniJyo}} considers the maps
$S,T: (\Z,d,c_1) \to \Z$ given by $S(x) = 2-x^2$, $T(x)=x^2$ for 
all $x \in \Z$, where $d(x,y) = |x-y|$. It is claimed that
$d(S(x),S(y)) \le \frac{1}{2}d(T(x),T(y))$ for all $x,y \in \Z$, but 
this is clearly incorrect, since $d(S(x),S(y)) = d(T(x),T(y))$.
\end{remark}

\section{Common fixed point assertions in~\cite{SrideviKK}}
The paper~\cite{SrideviKK} is another that is concerned with common 
fixed points of pairs of self-maps on digital metric spaces. We have
the following.

\begin{definition}
{\rm ~\cite{SrideviKK}}
\label{weakCompatible} 
Let $(X,d,\kappa)$ be a digital metric space. Let $S,T: X \to X$.
Then $S$ and $T$ are {\em weakly compatible} if $S(x)=T(x)$ implies
$S(T(x)) = T(S(x))$.
\end{definition}

I.e., $S$ and $T$ are weakly compatible if they commute at all
coincidence points. Note this definition does not require that
coincidence points exist.

\begin{exl} The maps
$S,T: [0,1]_{\Z} \to [0,1]_{\Z}$ given by
$S(x) = 0$, $T(x) = 1$, are weakly compatible, despite having no
coincidence points, as they vacuously satisfy the requirement of
commuting at all coincidence points.
\end{exl}

Theorem~3.6, and Corollaries~3.7 and~3.8 of~\cite{SrideviKK} are
concerned with self-maps $S$ and $T$ on a digital metric space
$(X,d,\kappa)$ for which $d(S(x),S(y)) < d(T(x), T(y))$ for all 
$x,y \in X$ such that $x \neq y$. But such results may be quite 
limited under common conditions, as in the following.

\begin{prop}
Let $(X,d,\kappa)$ be a digital metric space with $|X| > 1$.
Let $S,T: \Z \to \Z$ be such that $x \neq y$ implies
$d(S(x),S(y)) < d(T(x),T(y))$. Then $T$ is one-to-one. If, further,
$X$ is finite, then $T$ is a bijection and $S$ is neither one-to-one
nor onto.
\end{prop}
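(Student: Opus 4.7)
The plan is to prove this in three steps, keyed to the three conclusions.

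\textbf{Step 1 (T is one-to-one).} This is immediate: if there were distinct $x,y \in X$ with $T(x) = T(y)$, then the hypothesis would give $d(S(x),S(y)) < d(T(x),T(y)) = 0$, which contradicts the non-negativity of $d$.

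\textbf{Step 2 (T is a bijection when $X$ is finite).} Any injective self-map on a finite set is surjective, so Step~1 delivers this for free.

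\textbf{Step 3 (S is not one-to-one when $X$ is finite).} This is the main content of the proposition. My approach is a summation/counting argument: suppose for contradiction that $S$ is one-to-one; then $S$ is a bijection on the finite set $X$, and by Step~2 so is $T$. Consider the sum over unordered pairs
\[ \Sigma_S = \sum_{\{x,y\} \subset X,\, x \neq y} d(S(x),S(y)), \qquad \Sigma_T = \sum_{\{x,y\} \subset X,\, x \neq y} d(T(x),T(y)). \]
Because $S$ is a bijection of $X$, the map $\{x,y\} \mapsto \{S(x),S(y)\}$ is a bijection on the set of unordered pairs of distinct elements of $X$, so $\Sigma_S = \sum_{\{a,b\}} d(a,b)$; likewise $\Sigma_T$ equals the same sum, giving $\Sigma_S = \Sigma_T$. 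On the other hand, summing the strict inequality $d(S(x),S(y)) < d(T(x),T(y))$ termwise over all such pairs (of which there is at least one, since $|X| > 1$) yields $\Sigma_S < \Sigma_T$, a contradiction.

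\textbf{Step 4 (S is not onto when $X$ is finite).} Since $X$ is finite, any surjective self-map is injective; Step~3 already rules out injectivity of $S$, so $S$ cannot be onto either.

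The only step that requires any thought is Step~3, where the natural temptation is to try to construct an explicit counterexample pair or to iterate $S$ until a cycle closes; the cleaner route is the aggregate summation argument above, which uses the bijectivity of $S$ and $T$ only to reindex a sum over unordered pairs. No additional hypothesis on $d$ beyond the metric axioms is needed.
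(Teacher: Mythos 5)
Your proof is correct, and Steps 1, 2, and 4 match the paper's argument exactly. Where you diverge is in the order and method of the last two conclusions: the paper first shows $S$ is \emph{not onto} by an extremal argument --- pick $x_0, y_0$ realizing $\mathrm{diam}\, X$; if $S$ were onto, preimages $x', y'$ of $x_0, y_0$ would give $\mathrm{diam}\, X = d(S(x'),S(y')) < d(T(x'),T(y'))$, exceeding the diameter --- and then deduces ``not one-to-one'' from finiteness. You go the other way: you rule out injectivity of $S$ by summing the strict inequality $d(S(x),S(y)) < d(T(x),T(y))$ over all unordered pairs and observing that if $S$ were a bijection the two sums would have to coincide by reindexing, and then deduce ``not onto'' from finiteness. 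Both arguments are sound and both lean on the finiteness of $X$ in an essential way (the paper to guarantee the diameter is attained, you to make the pair sums finite and the reindexing a genuine bijection). The paper's route is slightly more economical, needing only one extremal pair rather than an aggregate identity, and it yields ``not onto'' directly without first assuming $S$ injective; your summation argument is a clean alternative and, as you note, uses nothing about $d$ beyond nonnegativity.
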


\begin{proof}
Since $x \neq y$ implies $0 \le d(S(x),S(y)) < d(T(x),T(y))$, we 
have that $x \neq y$ implies $T(x) \neq T(y)$. Therefore, 
$T$ is one-to-one.

Suppose $X$ is finite. Since $T$ is one-to-one, it follows that $T$ is a bijection. Further, there exist $x_0,y_0 \in X$ such that
$d(x_0,y_0) = diam X$. If $S$ were onto, there would exist $x',y' \in X$
such that $S(x')=x_0$ and $S(y')=y_0$. By hypothesis, we would then have
\[ d(x_0,y_0) = d(S(x'),S(y')) < d(T(x'),T(y')),
\]
contrary to our choice of $x_0,y_0$. Therefore, $S$ is not onto.
Since $X$ is finite, it follows that $S$ is not one-to-one.
\end{proof}

\begin{prop}
Let $(\Z,d,c_1)$ be a digital metric space, where $d(x,y) = |x-y|$.
Let $S,T: \Z \to \Z$ be such that $x \neq y$ implies
$d(S(x),S(y)) < d(T(x),T(y))$. If $T$ is $c_1$-continuous, then
$S$ is a constant function.
\end{prop}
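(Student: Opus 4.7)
The plan is to exploit $c_1$-continuity of $T$ to force $d(T(x),T(x+1)) \le 1$ for every $x \in \Z$, then use the integer-valuedness of $d$ to collapse the strict inequality $d(S(x),S(y)) < d(T(x),T(y))$ into $S(x) = S(x+1)$, and finally propagate this through $\Z$ by connectedness.

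First I would fix an arbitrary $x \in \Z$. Since $x \adj_{c_1} x+1$ and $T$ is $(c_1,c_1)$-continuous, either $T(x) = T(x+1)$ or $T(x) \adj_{c_1} T(x+1)$; in both cases $d(T(x),T(x+1)) \le 1$. I would then rule out $T(x) = T(x+1)$: that would force $d(T(x),T(x+1)) = 0$, and the hypothesis (applied to the distinct points $x$ and $x+1$) would give $0 \le d(S(x),S(x+1)) < 0$, a contradiction. Hence $d(T(x),T(x+1)) = 1$ exactly.

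Next, the hypothesis yields $d(S(x),S(x+1)) < 1$. Because $S$ maps $\Z$ into $\Z$ and $d(u,v) = |u-v|$, the value $d(S(x),S(x+1))$ is a nonnegative integer, so it must equal $0$; that is, $S(x) = S(x+1)$.

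Finally, since $x \in \Z$ was arbitrary, $S$ is constant on every pair of consecutive integers, and a trivial induction in both directions from $0$ shows $S$ is constant on all of $\Z$. There is no serious obstacle here; the only subtlety worth flagging is the use of integer-valuedness of $d$ together with the strict inequality, which is what makes the argument specific to $(\Z,d,c_1)$ and would not go through verbatim for an arbitrary digital metric space.
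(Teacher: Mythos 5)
Your proof is correct and follows essentially the same route as the paper's: use $c_1$-continuity of $T$ to bound $d(T(x),T(x+1))$ by $1$, deduce $d(S(x),S(x+1))<1$ hence $=0$ by integrality, and conclude by $c_1$-connectedness of $\Z$. Your version is slightly more explicit than the paper's (you separately rule out the case $T(x)=T(x+1)$, which the paper absorbs into the same inequality), but the argument is the same.
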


\begin{proof}
Since $T$ is $c_1$-continuous, for $x \adj_{c_1} x'$, we have 
$T(x) \adj_{c_1} T(x')$ or $T(x) = T(x')$, so $d(T(x),T(x')) \in \{0,1\}$.
Since $d(S(x),S(y)) < d(T(x),T(y))$, we must have $S(x)=S(x')$.
Since $\Z$ is $c_1$-connected, it follows that $S$ is a constant function.
\end{proof}

\section{Contractive mappings in~\cite{SrideviKKb}}
The paper~\cite{SrideviKKb} discusses fixed point assertions for
contractive-type mappings on digital metric spaces.

\begin{definition}
{\rm ~\cite{SrideviKKb}}
\label{phiContraction}
Suppose $(X,d,\kappa)$ is a digital metric space, $T: X \to X$,
and $\phi \in \Phi$. If
\[ d(T(x),T(y)) \le \phi(d(x,y)) \mbox{ for all } x,y \in X,
\]
then $T$ is called a {\em digital $\phi$-contraction}.
\end{definition}

\begin{remark}
The function of Example~\ref{counter} is a digital $\phi$-contraction
for $\phi(t) = t/2$. This shows that a digital $\phi$-contraction need
not be digitally continuous.
\end{remark}

A limitation of such functions is given in the following.

\begin{prop}
Let $(X,d,\kappa)$ be a digital metric space and let $T: X \to X$ be
a digital $\phi$-contraction for some $\phi \in \Phi$. Suppose
$d$ is an $\ell_p$ metric and $\phi(t) < 1$ 
for all $x,y \in X$. Then $T$ is a constant function.
\end{prop}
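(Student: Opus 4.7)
The plan is to read the hypothesis ``$\phi(t) < 1$ for all $x,y \in X$'' as meaning that $\phi(d(x,y)) < 1$ for every pair $x,y \in X$ (the quantifier clearly belongs to $x,y$, so $t$ must stand for $d(x,y)$). Once this is clarified, the result becomes a direct one-line consequence of the fact that the $\ell_p$ metric separates points of $\Z^n$ by a gap of at least $1$.

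First I would pick an arbitrary pair $x, y \in X$ and apply the $\phi$-contraction inequality of Definition~\ref{phiContraction} to obtain
\[ d(T(x), T(y)) \le \phi(d(x,y)) < 1. \]
Next I would invoke the first bullet of the proposition in Section~\ref{l-p-def} (which says that for $\ell_p$ metrics on $\Z^n$, $d(u,v) < 1$ forces $u=v$) with $u=T(x)$, $v=T(y)$, since $T(x), T(y) \in X \subset \Z^n$. This yields $T(x) = T(y)$. Because $x, y$ were arbitrary, $T$ must be constant.

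There is essentially no obstacle here; the only subtle point is the interpretation of the hypothesis, which I would flag briefly in the proof. If instead the intended hypothesis were the weaker ``$\phi(t) < 1$ for all $t$ in the range of $d$,'' the same argument still works verbatim, since $d(x,y)$ is in that range. Thus the proof reduces to two lines of text plus the application of the standing $\ell_p$ proposition.
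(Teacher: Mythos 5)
Your proof is correct and is essentially identical to the paper's own argument: both apply the $\phi$-contraction inequality to an arbitrary pair to get $d(T(x),T(y)) \le \phi(d(x,y)) < 1$ and then use the $\ell_p$ separation property to conclude $T(x)=T(y)$. Your reading of the (admittedly garbled) hypothesis ``$\phi(t)<1$ for all $x,y\in X$'' matches the paper's intended meaning.
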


\begin{proof}
Let $x, x' \in X$. Then 
\[ d(T(x),T(x')) \le \phi(d(x,x')) < 1. \]
Therefore $T(x)=T(x')$. It follows that
$T$ is a constant function.
\end{proof}

\begin{definition}
\label{phiContraction}
Let $(X, d, \kappa)$ be a digital metric space, $T: X \to X$, and
$\phi \in \Phi$. We say that
\begin{itemize}
\item $T$ is {\em $\phi$-contractive} {\rm \cite{SrideviKKb}} if
$\phi(d(T(x), T(y))) < \phi(d(x, y))$ for all $x, y \in X$, $x \neq y$.
\item $T$ is a {\em digital contraction map} {\rm \cite{EgeKaraca}} if
      for some $\alpha \in (0,1)$, $d(T(x), T(y)) \le \alpha d(x,y)$
      for all $x, y \in X$.
\end{itemize}
\end{definition}

The similarity of these definitions yields the following.

\begin{prop}
Let $(X, d, \kappa)$ be a digital metric space with $X$ finite,
and let $T: X \to X$. If $T$ is digital contraction map then
for some $\phi \in \Phi$, $T$ is $\phi$-contractive.
The converse is true when $X$ is finite.
\end{prop}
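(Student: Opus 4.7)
The plan is to handle each implication separately; the finiteness of $X$ is only needed in the converse direction, where one must upgrade pointwise strict contraction to a uniform contraction constant.

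For the forward direction, suppose $T$ is a digital contraction with some $\alpha \in (0,1)$. I would simply choose $\phi(t) = t/2$, which lies in $\Phi$ since it is strictly increasing, vanishes only at $0$, and satisfies $\phi(t) < t$ for $t > 0$. For $x \neq y$ we have $d(x,y) > 0$, hence $d(T(x),T(y)) \le \alpha d(x,y) < d(x,y)$; applying the strictly increasing $\phi$ yields $\phi(d(T(x),T(y))) < \phi(d(x,y))$, so $T$ is $\phi$-contractive. Finiteness of $X$ plays no role in this step.

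For the converse, suppose $T$ is $\phi$-contractive for some $\phi \in \Phi$. For each pair $x \neq y$, the hypothesis $\phi(d(T(x),T(y))) < \phi(d(x,y))$ together with strict monotonicity of $\phi$ forces $d(T(x),T(y)) < d(x,y)$, so the ratio $r(x,y) := d(T(x),T(y))/d(x,y)$ lies in $[0,1)$. Because $X$ is finite, the collection $\{(x,y) \in X \times X : x \neq y\}$ is finite, so $\alpha_0 := \max r(x,y)$ exists and belongs to $[0,1)$. Taking $\alpha := \max\{\alpha_0, 1/2\} \in (0,1)$ then gives $d(T(x),T(y)) \le \alpha d(x,y)$ for $x \neq y$, and the inequality is trivial when $x = y$, establishing that $T$ is a digital contraction map.

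The main subtlety, and the only place where finiteness of $X$ is essential, is the transition from pointwise strict contraction to a uniform contraction constant in the converse: on an infinite domain the ratios $r(x,y)$ could approach $1$ while all remaining strictly below it, obstructing any uniform bound $\alpha < 1$. On a finite $X$ this cannot happen, so the maximum suffices.
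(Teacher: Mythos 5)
Your proof is correct and follows essentially the same route as the paper's: a linear $\phi$ for the forward direction (you take $\phi(t)=t/2$, the paper takes $\phi(t)=\alpha t$), and for the converse the maximum of the ratios $d(T(x),T(y))/d(x,y)$ over the finitely many pairs $x\neq y$. Your step $\alpha := \max\{\alpha_0, 1/2\}$ is in fact slightly more careful than the paper's version, which tacitly assumes the maximum ratio lies in $(0,1)$ and thus overlooks the degenerate case of a constant $T$.
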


\begin{proof}
Both assertions are trivial if $X$ is a singleton. Therefore, in
the following, assume $X$ is not a singleton.

Let $T$ be a digital contraction map. Then for some $\alpha \in (0,1)$,
$d(T(x), T(y)) < \alpha d(x,y)$ for all $x, y \in X$. 
Therefore, $\alpha d(T(x), T(y)) < \alpha^2 d(x,y)$. Note the function
$\phi(t) = \alpha t$ is a member of $\Phi$. Then $x \neq y$ implies
\[ \phi(d(T(x), T(y))) = \alpha d(T(x),T(y)) \le \alpha^2 d(x,y)
   < \alpha d(x,y) = \phi(d(x,y)),
\]
so $T$ is $\phi$-contractive.

Suppose $X$ is finite and $T$ is $\phi$-contractive for some
$\phi \in \Phi$. Then $x \neq y$ implies
\[ \phi(d(T(x), T(y))) < \phi(d(x, y)). \]
Since $\phi$ is monotone increasing, $x \neq y$ implies
\[d(T(x), T(y)) < d(x, y).\]
Since $X$ is finite, we can have $\alpha \in (0,1)$ well defined by
\[ \alpha = \max \left\{\frac{d(T(x), T(y))}{d(x, y)} \, | \,
                  x,y \in X, x \neq y \right \}. \]
Since $X$ is finite, $x \neq y$ implies
\[ \frac{d(T(x), T(y))}{d(x, y)} \le \alpha, \mbox{ or } 
    d(T(x), T(y)) \le \alpha d(x, y).
\]
Since the latter inequality also holds when $x=y$, it follows that
$T$ is a digital contraction map.
\end{proof}

Limitations on digital contraction maps are discussed in~\cite{BxSt18}.
Fixed point results for such maps appear in~\cite{EgeKaraca,SrideviKKb}.

\begin{definition}
{\rm \cite{SrideviKKb}}
Let $(X, d, \kappa)$ be a digital metric space. The function $T: X \to X$
is an {\em $\alpha$-$\psi$-$\phi$-contractive type mapping} if
there exist three functions $\alpha: X \times X \to [0,\infty)$ and
$\psi, \phi \in \Phi$ such that
\[ \alpha(x, y) \psi(d(Tx, Ty)) \le \psi(d(x, y)) - \phi(d(x, y))
   \mbox{ for all } x, y \in X.
\]
\end{definition}

\begin{remark}
One sees easily that every map $T: X \to X$ is an $\alpha$-$\psi$-$\phi$-contractive 
type mapping, for $\alpha(x,y) = 0$ and $\psi(t) \ge \phi(t)$. Hence
such a function $T$ need not be digitally continuous.
\end{remark}

Example~3.11 of~{\rm \cite{SrideviKKb}} has the following minor errors.
It is stated that $X=[0,\infty)$ (later corrected to 
$X=\{0,1,2,\ldots\}$). The statement
``Then T has [{\em sic}] $\alpha$-admissible" should be
``Then T is not $\alpha$-admissible".

The assertion stated as Theorem~3.12 of~{\rm \cite{SrideviKKb}} is both 
mis-stated and incorrect. Its mis-statement is in 
``$\alpha: [0,\infty) \to [0,\infty)$", which
should be ``$\alpha: X^2 \to [0,\infty)$". After making this correction,
the assertion would be as follows.
\begin{quote}
{\em
Let $(X,d,\kappa)$ be a digital metric space, $T: X \to X$, and
$\alpha: X^2 \to [0,\infty)$. Suppose
\begin{enumerate}
\item $T$ is $\alpha$-admissible;
\item there exists $x_0 \in X$ such that $\alpha(x_0,T(x_0)) \ge 1$;
\item $T$ is digitally continuous; and
\item $\alpha(x,y) \psi(d(T(x),T(y))) \le \psi(M(x,y)) - \phi(M(x,y))$,
      where
      \[M(x,y) = \]
      \[  \max\{d(x,y), \, d(x,T(x)), \, d(y,T(y)), \,
                   [d(x, T(y)) + d(y, T(x))] / 2\}.
      \]
      for all $x,y \in X$.
\end{enumerate}
Then $T$ has a fixed point. Further, if $u$ and $v$ are fixed points of
$T$ such that $\alpha(u,v) \ge 1$, then $u=v$.
} 
\end{quote}

This assertion is incorrect without additional hypotheses. For example,
if we take $X = [0,1]_{\Z}$, $T(x) = 1-x$, $\alpha(x,y)=1$, $\psi(x)=x$,
$\phi(x)=-1$, then all the hypotheses above are satisfied, but $T$ has
no fixed points.

If $X$ is finite or $d$ is an $\ell_p$ metric, then
the argument given as a proof for this assertion in~\cite{SrideviKKb}
does not require the continuity assumption, but does require that 
$\psi$ and $\phi$ have properties of $\Phi$. Thus, a correct, somewhat
modified version is as follows.

\begin{thm}
Let $(X,d,\kappa)$ be a digital metric space where $X$ is finite or
$d$ is an $\ell_p$ metric. Let $T: X \to X$, and
$\alpha: X^2 \to [0,\infty)$. Suppose
\begin{enumerate}
\item $T$ is $\alpha$-admissible;
\item there exists $x_0 \in X$ such that $\alpha(x_0,T(x_0)) \ge 1$; and
\item There exist $\psi, \phi \in \Phi$ such that
      \[ \alpha(x,y) \psi(d(T(x),T(y))) \le \psi(M(x,y)) - \phi(M(x,y)),\]
      where for all $x,y \in X$, $M(x,y) = $
      \[  \max\{d(x,y), \, d(x,T(x)), \, d(y,T(y)), \,
                   [d(x, T(y)) + d(y, T(x))] / 2\}.
      \]
\end{enumerate}
Then $T$ has a fixed point. Further, if $u$ and $v$ are fixed points of
$T$ such that $\alpha(u,v) \ge 1$, then $u=v$.
\end{thm}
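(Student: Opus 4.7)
My plan is to follow the standard Picard iteration argument used for $\alpha$-$\psi$-type contractions, but exploit the discreteness supplied by Corollary~\ref{Han-Cauchy-cor} to terminate the iteration in finitely many steps rather than pass to a limit. Start with the $x_0$ given by hypothesis~2 and define the orbit $x_{n+1} = T(x_n)$. If $x_{n+1} = x_n$ for some $n$, we are done, so suppose $x_n \neq x_{n+1}$ for all $n$. Since $T$ is $\alpha$-admissible and $\alpha(x_0, x_1) = \alpha(x_0, T(x_0)) \ge 1$, an easy induction gives $\alpha(x_n, x_{n+1}) \ge 1$ for all $n$.

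Next, apply hypothesis~3 with $x = x_{n-1}$, $y = x_n$. Because $\alpha(x_{n-1},x_n) \ge 1$ and $\psi \ge 0$, we obtain
\[ \psi(d(x_n, x_{n+1})) \le \alpha(x_{n-1},x_n)\psi(d(T(x_{n-1}), T(x_n))) \le \psi(M(x_{n-1},x_n)) - \phi(M(x_{n-1},x_n)). \]
A direct calculation gives $M(x_{n-1}, x_n) = \max\{d(x_{n-1}, x_n), d(x_n, x_{n+1}), d(x_{n-1},x_{n+1})/2\}$, and the triangle inequality collapses this to $\max\{d(x_{n-1}, x_n), d(x_n, x_{n+1})\}$. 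The case $M(x_{n-1},x_n) = d(x_n,x_{n+1})$ forces $\phi(d(x_n,x_{n+1})) \le 0$, and since $\phi \in \Phi$ vanishes only at $0$, this would give $x_n = x_{n+1}$, contradicting our standing assumption. Hence $M(x_{n-1}, x_n) = d(x_{n-1}, x_n)$, and then $\psi(d(x_n, x_{n+1})) \le \psi(d(x_{n-1}, x_n)) - \phi(d(x_{n-1}, x_n)) < \psi(d(x_{n-1}, x_n))$. Since $\psi$ is increasing, $d(x_n, x_{n+1}) < d(x_{n-1}, x_n)$, so $\{d(x_n, x_{n+1})\}$ is strictly decreasing in $n$.

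Here is where the discreteness hypothesis does its work, and this is really the only step that differs from the classical continuous-metric proof. Under the assumption that $X$ is finite or $d$ is an $\ell_p$ metric, Corollary~\ref{Han-Cauchy-cor} (more precisely, its hypothesis) says there is a positive lower bound $a$ on distances between distinct points. But a strictly decreasing sequence of real numbers all of which are $\ge a$ is impossible. Hence our standing assumption fails: some $x_n$ equals $x_{n+1} = T(x_n)$, and $x_n$ is a fixed point of $T$.

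For the uniqueness clause, suppose $u,v$ are fixed points with $\alpha(u,v) \ge 1$. Since $d(u,T(u)) = d(v,T(v)) = 0$ and $d(u,T(v)) = d(v,T(u)) = d(u,v)$, the quantity $M(u,v)$ simplifies to $d(u,v)$. Then hypothesis~3 gives $\psi(d(u,v)) \le \psi(d(u,v)) - \phi(d(u,v))$, so $\phi(d(u,v)) = 0$, and the $\Phi$-property forces $d(u,v) = 0$, i.e., $u = v$. The main obstacle is not really analytic but notational: keeping the chain of inequalities straight while correctly invoking both the ``$\alpha \ge 1$'' and ``$\psi$ monotone, $\phi(t)=0 \Leftrightarrow t=0$'' properties at the right moments.
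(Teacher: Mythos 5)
Your argument is essentially the paper's: Picard iteration from the hypothesized $x_0$, induction via $\alpha$-admissibility to get $\alpha(x_n,x_{n+1})\ge 1$, the computation collapsing $M(x_{n-1},x_n)$ to $\max\{d(x_{n-1},x_n),\,d(x_n,x_{n+1})\}$ via the triangle inequality, the conclusion $d(x_n,x_{n+1})<d(x_{n-1},x_n)$, and the same uniqueness computation. Your case analysis ruling out $M(x_{n-1},x_n)=d(x_n,x_{n+1})$ by forcing $\phi(d(x_n,x_{n+1}))\le 0$ is a small variant of the paper's route (which instead deduces $d(x_n,x_{n+1})<M(x_{n-1},x_n)$ from the strict inequality and monotonicity of $\psi$), but both are valid.

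There is, however, a flaw at the one step you yourself single out as the crux. You justify termination by saying that distances between distinct points are bounded below by some $a>0$ and that ``a strictly decreasing sequence of real numbers all of which are $\ge a$ is impossible.'' That statement is false: $a+1/n$ is strictly decreasing and bounded below by $a$. A positive lower bound on distances is not by itself enough to stop the iteration. What actually does the work is that the sequence $d(x_n,x_{n+1})$ takes values in the set $\{d(x,y)\,:\,x,y\in X,\ d(x,y)\le d(x_0,x_1)\}$, and this set is \emph{finite} under either hypothesis --- trivially if $X$ is finite, and for an $\ell_p$ metric on $X\subset\Z^n$ because $d(x,y)\le C$ forces each integer coordinate difference to lie in $[-C,C]$, leaving only finitely many possible values. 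An infinite strictly decreasing sequence cannot take values in a finite set, so some $d(x_n,x_{n+1})=0$ and $x_n$ is fixed. (The paper is terse here too, asserting the termination without spelling this out, but it does not offer an incorrect reason.) With that repair your proof is complete and matches the paper's.
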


\begin{proof}
Our argument is similar to its analog in~\cite{SrideviKKb}.

Let $x_0 \in X$ be such that $\alpha(x_0,T(x_0)) \ge 1$.
Let $x_n$ be defined inductively by $x_{n+1} = T(x_n)$ for $n \ge 0$.
Therefore, $\alpha(x_0,x_1) = \alpha(x_0,T(x_0)) \ge 1$, so
since $T$ is $\alpha$-admissible, we have
\[ \alpha(x_1,x_2) = \alpha(T(x_0),T(x_1)) \ge 1,
\]
and, inductively,
\[
\alpha(x_n,x_{n+1}) = \alpha(T(x_{n-1}),T(x_n)) \ge 1
\mbox{ for all } n.
\]
Then
\[ \psi(d(x_n,x_{n+1})) = \psi(d(T(x_{n-1}), T(x_n))) \le
  \alpha(x_{n-1},x_n) \psi(d(T(x_{n-1}), T(x_n)) 
\]
\[ \le \psi(M(x_{n-1},x_n)) - \phi(M(x_{n-1},x_n)) < \psi(M(x_{n-1},x_n)).
\]
Since $\psi$ is increasing,
\[   d(x_n,x_{n+1}) < M(x_{n-1},x_n) =
\]
\[ \max\{d(x_{n-1},x_n), d(x_{n-1},T(x_{n-1})),
   d(x_n,T(x_n)), \]
\[[d(x_{n-1},T(x_n)) + d(x_n,T(x_{n-1}))]/2\} =
\]
\[ \max\{d(x_{n-1},x_n), d(x_{n-1}, x_n), d(x_n, x_{n+1}),
   [d(x_{n-1},x_{n+1}) + d(x_n,x_n)]/2 \}=
\]
\[  \max\{d(x_{n-1},x_n), d(x_n, x_{n+1}), [d(x_{n-1},x_{n+1}) + 0]/2\} =
\]
\begin{equation}
\label{continueSec7-a}
 \max\{d(x_{n-1},x_n), d(x_n, x_{n+1}), d(x_{n-1},x_{n+1})/2\}
\end{equation}
By the triangle inequality,
\[ d(x_{n-1},x_{n+1})/2 \le [d(x_{n-1},x_n) + d(x_n, x_{n+1})]/2 
\]
\begin{equation}
\label{continueSec7-b}
     \le \max\{d(x_{n-1},x_n), d(x_n, x_{n+1})\}
\end{equation}
Combining~(\ref{continueSec7-a}) and~(\ref{continueSec7-b}),
$d(x_n,x_{n+1}) < \max\{d(x_{n-1},x_n), d(x_n, x_{n+1})\}$, so
\[ d(x_n,x_{n+1}) < d(x_{n-1},x_n). \]
Since $X$ is finite or $d$ is an $\ell_p$ metric, it follows that
for sufficiently large $n$, $d(x_n,x_{n+1}) = 0$, or
$x_n = x_{n+1} = T(x_n)$, so $x_n$ is a fixed point of $T$.

Suppose $u$ and $v$ are fixed points of $T$ with $\alpha(u,v) \ge 1$.
We have
\[ \psi(d(u,v)) = \psi(d(T(u),T(v)) \le \alpha(u,v) \psi(d(T(u), T(v))) \le \]
\[ \psi(M(u,v)) - \phi(M(u,v)) = \psi(d(u,v)) - \phi(d(u,v)),
\]
or $0 \le - \phi(d(u,v))$, which implies $d(u,v)=0$, or $u=v$.
\end{proof}

\section{Contractive maps in~\cite{Dol-Nal}}
In~\cite{BxSt18}, we discussed the paper~\cite{Dol-Nal}, including
mention of the fact that the author of the current work was identified
as a reviewer of~\cite{Dol-Nal} and that the latter work was published
without correction of several flaws mentioned in the review. Here, we
present additional discussion of~\cite{Dol-Nal}.

\begin{definition}
{\rm ~\cite{Dol-Nal}}
\label{WUSDCdef}
Let $(X, d,\kappa)$ be a digital metric space. A self map 
$T: X \to X$ is a {\em weakly uniformly strict digital contraction}
if given $\varepsilon > 0$, there exists $\delta > 0$ such that
$\varepsilon \le d(x, y) < \varepsilon + \delta$ implies 
$d(T(x), T(y)) < \varepsilon$ for all $x, y \in X$.
\end{definition}

\begin{lem}
\label{Dol-Nal-lem}
Let $(X, d,\kappa)$ be a digital metric space. Let 
$T: X \to X$ be a weakly uniformly strict digital contraction. Then
for all $x,y \in X$ such that $x \neq y$, $d(T(x),T(y)) < d(x,y)$.
\end{lem}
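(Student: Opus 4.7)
The plan is to apply the definition of weakly uniformly strict digital contraction directly, using $\varepsilon := d(x,y)$ itself as the chosen tolerance. Since $x \neq y$, the metric axioms give $d(x,y) > 0$, so this $\varepsilon$ is a legitimate positive number to plug into Definition~\ref{WUSDCdef}.

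First I would fix $x, y \in X$ with $x \neq y$ and set $\varepsilon = d(x,y) > 0$. By Definition~\ref{WUSDCdef}, there exists $\delta > 0$ such that every pair $u, v \in X$ with $\varepsilon \le d(u,v) < \varepsilon + \delta$ satisfies $d(T(u), T(v)) < \varepsilon$. Next I would observe that the specific pair $(u,v) = (x,y)$ trivially satisfies this hypothesis, since $d(x,y) = \varepsilon$ clearly lies in the half-open interval $[\varepsilon, \varepsilon + \delta)$ for any $\delta > 0$. Therefore $d(T(x), T(y)) < \varepsilon = d(x,y)$, which is the desired conclusion.

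There is essentially no obstacle; the argument is a one-line unpacking of the definition. The only subtlety worth flagging is the role of the assumption $x \neq y$, which is precisely what guarantees that $\varepsilon = d(x,y)$ is strictly positive and hence an admissible input to the ``given $\varepsilon > 0$'' clause of Definition~\ref{WUSDCdef}. Without $x \neq y$, we would have $\varepsilon = 0$, and the definition grants us nothing.
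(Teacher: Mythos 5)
Your proof is correct: taking $\varepsilon = d(x,y) > 0$ and noting that $d(x,y)$ automatically lies in $[\varepsilon, \varepsilon+\delta)$ is exactly the standard one-line unpacking of Definition~\ref{WUSDCdef}. The paper does not actually write out an argument here --- it simply defers to the first paragraph of the proof of Theorem~3.3 in the cited source --- so your self-contained version supplies precisely the argument being referenced, and your remark about why $x \neq y$ is needed (to make $\varepsilon$ strictly positive) is the right point to flag.
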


\begin{proof}
This is shown in the first paragraph of the argument given as a proof
of Theorem~3.3 in~\cite{Dol-Nal}.
\end{proof}

\begin{cor}
Let $(X, d,\kappa)$ be a digital metric space, such that $X$ is finite.
Let $T: X \to X$ be a weakly uniformly strict digital contraction. Then
$T$ is a digital contraction map.
\end{cor}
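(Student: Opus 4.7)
The plan is to reduce to Lemma~\ref{Dol-Nal-lem} and then apply the same finite-max trick that appeared earlier in the excerpt (in the proof that a $\phi$-contractive map on a finite image is a digital contraction map).

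First I would dispose of the trivial case $|X| \le 1$: here $T$ is forced to be constant, and the defining inequality $d(T(x),T(y)) \le \alpha d(x,y)$ holds vacuously (or with both sides zero) for any $\alpha \in (0,1)$. So I may assume $|X| \ge 2$, which guarantees the existence of at least one pair $x \ne y$ in $X$.

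Next, by Lemma~\ref{Dol-Nal-lem}, for every pair $x,y \in X$ with $x \neq y$ we have $d(T(x),T(y)) < d(x,y)$, and in particular the ratio $d(T(x),T(y))/d(x,y)$ is a well-defined real number strictly less than $1$. Since $X$ is finite, there are only finitely many such ordered pairs, so I can define
\[
\alpha = \max\left\{\frac{d(T(x),T(y))}{d(x,y)} \,\Big|\, x,y \in X,\ x \neq y\right\}.
\]
A maximum of finitely many numbers, each strictly less than $1$ and non-negative, is itself in $[0,1)$. If $\alpha = 0$, then $T$ is constant and any $\alpha' \in (0,1)$ works instead; otherwise $\alpha \in (0,1)$.

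Finally, I would verify that this $\alpha$ witnesses the digital contraction inequality. For $x \neq y$, the definition of $\alpha$ gives $d(T(x),T(y)) \le \alpha\, d(x,y)$ directly. For $x = y$, both sides are $0$, so the inequality holds trivially. Therefore $T$ is a digital contraction map, completing the proof. The only mildly subtle point is the case analysis for small or trivial $X$ and making sure $\alpha$ is genuinely in $(0,1)$; the finiteness of $X$ is exactly what lets the strict inequalities supplied by Lemma~\ref{Dol-Nal-lem} be upgraded to a uniform contraction constant, and this is the main (and in fact only) obstacle one has to handle carefully.
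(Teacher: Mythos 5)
Your proposal is correct and follows essentially the same route as the paper: invoke Lemma~\ref{Dol-Nal-lem} to get the strict pointwise inequality, then use finiteness of $X$ to take the maximum of the ratios $d(T(x),T(y))/d(x,y)$ as the contraction constant. The only difference is bookkeeping — you handle the constant-$T$ and singleton cases explicitly at the end, whereas the paper dismisses the constant case up front with a ``without loss of generality.''
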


\begin{proof}
Without loss of generality, $T$ is not constant.
Since $X$ is finite,
\[ \alpha = \max \left \{ \frac{d(T(x),T(y))}{d(x,y)} ~| ~
       x,y \in X, x \neq y \right \}
\]
is well defined. Since $T$ is not constant, $\alpha > 0$; and,
since $X$ is finite, by Lemma~\ref{Dol-Nal-lem}, $\alpha < 1$.
Then for $x,y \in X$ such that $x \neq y$,
\[ d(T(x),T(y)) = \frac{d(T(x),T(y))}{d(x,y)}d(x,y) \le \alpha d(x,y).
\]
By Definition~\ref{phiContraction}, $T$ is a digital contraction map.
\end{proof}

\begin{prop}
Let $(X, d,\kappa)$ be a digital metric space such that 
$1 < |X| < \infty$.
Let $T: X \to X$ be a weakly uniformly strict digital contraction.
Then $T$ is neither one-to-one nor onto.
\end{prop}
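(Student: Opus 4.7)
The plan is to leverage Lemma~\ref{Dol-Nal-lem} together with the fact that on a finite set, a self-map is one-to-one if and only if it is onto. So it suffices to rule out just one of the two properties, say surjectivity, and the other falls automatically by the pigeonhole principle.

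First, I would observe that since $1 < |X| < \infty$, the quantity $\mathrm{diam}\,X$ is attained and is strictly positive: there exist $x_0, y_0 \in X$ with $x_0 \neq y_0$ and $d(x_0, y_0) = \mathrm{diam}\,X$. Next, suppose for contradiction that $T$ is onto. Then I can pick preimages $x', y' \in X$ with $T(x') = x_0$ and $T(y') = y_0$. Since $x_0 \neq y_0$, necessarily $x' \neq y'$, so Lemma~\ref{Dol-Nal-lem} applies to give $d(T(x'), T(y')) < d(x', y')$. But then
\[
\mathrm{diam}\,X = d(x_0, y_0) = d(T(x'), T(y')) < d(x', y') \le \mathrm{diam}\,X,
\]
a contradiction. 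Hence $T$ is not onto.

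Finally, since $X$ is finite, any self-map that is not onto is also not one-to-one (pigeonhole). Thus $T$ is neither one-to-one nor onto, completing the proof. The argument is quite short; there is no real obstacle, since the only nontrivial input (strict contraction of distances between distinct points) has already been established as Lemma~\ref{Dol-Nal-lem}, and the rest is the standard ``diameter-attaining pair'' trick combined with finiteness.
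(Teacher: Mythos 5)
Your proof is correct, but it takes a slightly different route from the paper's. Both arguments rule out surjectivity the same way: pick a pair attaining the maximum pairwise distance, pull it back under a hypothetical surjection, and contradict maximality via Lemma~\ref{Dol-Nal-lem}. Where you diverge is in handling injectivity: you deduce it from non-surjectivity by the pigeonhole principle on a finite set, whereas the paper proves it directly and independently, by taking a pair $x_0 \neq y_0$ attaining the \emph{minimum} positive distance $m = \min\{d(x,y) \mid x \neq y\}$ and observing that $d(T(x_0),T(y_0)) < m$ forces $T(x_0) = T(y_0)$. Your version is more economical (one extremal pair instead of two, and the pigeonhole step is standard), while the paper's version has the minor virtue that each half of the conclusion comes with its own self-contained witness; in particular the min-distance argument exhibits an explicit pair of points that $T$ identifies. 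Both are complete and correct; there is no gap in your argument.
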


\begin{proof}
Since $1 < |X| < \infty$, 
$m = \min\{d(x,y) \, | \, x,y \in X, x \neq y\}$ and
$M = \max\{d(x,y) \, | \, x,y \in X, x \neq y\}$
are well defined positive numbers, and there exist $x_0,y_0 \in X$
such that $x_0 \neq y_0$ and $d(x_0,y_0) = m$, and $x_1,y_1 \in X$
such that $x_1 \neq y_1$ and $d(x_1,y_1) = M$.

By Lemma~\ref{Dol-Nal-lem}, $d(T(x_0),T(y_0)) < d(x_0,y_0)=m$. By definition 
of~$m$, this implies $T(x_0)=T(y_0)$, so $T$ is not one-to-one.

If $T$ is onto, then there exist
$u,v \in X$ such that $T(u)=x_1$ and $T(v)=y_1$. Thus, by
Lemma~\ref{Dol-Nal-lem} we conclude that 
\[ M = d(x_1,y_1) = d(T(u), T(v)) < d(u,v),
\]
which contradicts our choice of~$M$. Therefore, $T$ is not onto.
\end{proof}

A limitation on weakly uniformly strict digital contractions is shown in the following.

\begin{prop}
\label{weaklyUnif}
Let $(X, d,c_u)$ be a digital metric space, where $d$ is an $\ell_p$ 
metric. Let $T: X \to X$ be a self map such that for some $\alpha > 0$,
$1 \le d(x,y) < u^{1/p}+\alpha$  
implies $d(T(x),T(y)) < 1$. If $X$ is $c_u$-connected, then $T$ is
constant.
\end{prop}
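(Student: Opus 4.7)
The plan is to use $c_u$-connectedness of $X$ together with the earlier Proposition on $\ell_p$ metrics (which says $d(x,y) < 1$ forces $x=y$, and $x \adj_{c_u} y$ forces $d(x,y) \le u^{1/p}$) to show that $T$ is constant along every $c_u$-edge, hence constant on~$X$.

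Concretely, I would first fix any two $c_u$-adjacent points $x \adj_{c_u} x'$ in $X$. Since $x \neq x'$, the first bullet of the $\ell_p$ proposition gives $d(x,x') \ge 1$, and the second bullet gives $d(x,x') \le u^{1/p} < u^{1/p} + \alpha$. Thus $x,x'$ fall into the range $1 \le d(x,x') < u^{1/p} + \alpha$ for which the hypothesis applies, yielding $d(T(x),T(x')) < 1$. Invoking the first bullet of the $\ell_p$ proposition once more, this forces $T(x) = T(x')$.

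Now I would exploit $c_u$-connectedness: given any $a,b \in X$, there is a $c_u$-path $a = y_0, y_1, \ldots, y_k = b$ with consecutive points $c_u$-adjacent. Applying the previous paragraph to each consecutive pair gives $T(y_i) = T(y_{i+1})$, so by transitivity $T(a) = T(b)$. Hence $T$ is constant.

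There is no real obstacle here; the only thing to watch is ensuring that the distance $d(x,x')$ between $c_u$-adjacent points genuinely lies in the interval $[1, u^{1/p} + \alpha)$ on the nose, which is immediate from the two bullets of the $\ell_p$ proposition plus the strict positivity of~$\alpha$. Note that the hypothesis is used only in its lightest form (at distances achievable by adjacent lattice points), so the value of $\alpha$ plays no further role beyond guaranteeing $u^{1/p} < u^{1/p} + \alpha$.
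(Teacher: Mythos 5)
Your proposal is correct and follows essentially the same route as the paper: both arguments use the $\ell_p$ proposition to place the distance between $c_u$-adjacent points in $[1, u^{1/p}+\alpha)$, apply the hypothesis to conclude $d(T(x),T(x'))<1$ and hence $T(x)=T(x')$, and then finish by $c_u$-connectedness. Your version merely spells out the path argument and the role of the two bullets more explicitly than the paper does.
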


\begin{proof}
Let $x \adj_{c_u} y$ in $X$. Then for every $\alpha > 0$,
$1 \le d(x,y) < u^{1/p}+\alpha$, so $d(T(x),T(y)) < 1$. Therefore,
$T(x)=T(y)$. Since $X$ is $c_u$-connected, it follows that $T$ is
constant.
\end{proof}

As an immediate consequence, we have the following.

\begin{cor}
Let $(X, d,c_1)$ be a digital metric space, where $d$ is an $\ell_p$ 
metric. Let $T: X \to X$ be a weakly uniformly strict digital contraction.
If $X$ is $c_1$-connected, then $T$ is constant.
\end{cor}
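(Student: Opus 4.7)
The plan is to apply Proposition~\ref{weaklyUnif} with $u=1$, so that $u^{1/p}=1$ regardless of the choice of $p$. The only thing to check is that a weakly uniformly strict digital contraction satisfies the hypothesis of that proposition when $u=1$.

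First I would invoke Definition~\ref{WUSDCdef} with the specific choice $\varepsilon = 1$. This produces a $\delta > 0$ such that $1 \le d(x,y) < 1 + \delta$ implies $d(T(x),T(y)) < 1$ for all $x,y \in X$. Setting $\alpha = \delta > 0$, this is precisely the hypothesis ``$1 \le d(x,y) < u^{1/p} + \alpha$ implies $d(T(x),T(y)) < 1$'' of Proposition~\ref{weaklyUnif} in the case $u=1$.

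Second, since $X$ is $c_1$-connected by assumption and $d$ is an $\ell_p$ metric, Proposition~\ref{weaklyUnif} directly yields that $T$ is constant. There is essentially no obstacle here: the corollary is just a matter of matching the $\varepsilon$--$\delta$ formulation of Definition~\ref{WUSDCdef} to the numeric bound in Proposition~\ref{weaklyUnif}, and the fact that $u^{1/p} = 1$ when $u=1$ is what makes the two thresholds line up. The only thing one needs to be slightly careful about is that the witness $\delta$ given by the definition may depend on $\varepsilon$, but here we only ever use the single value $\varepsilon = 1$, so one fixed $\delta$ suffices.
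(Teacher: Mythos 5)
Your proposal is correct and follows essentially the same route as the paper: instantiate Definition~\ref{WUSDCdef} at $\varepsilon = 1$ to obtain the $\alpha$ (your $\delta$) required by Proposition~\ref{weaklyUnif} with $u=1$, then conclude by $c_1$-connectedness. The paper's own proof does exactly this, merely spelling out in addition that $c_1$-adjacent points are at $\ell_p$-distance $1$.
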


\begin{proof}
Since $T$ is a weakly uniformly strict digital contraction,
for some $\alpha > 0$, $1 \le d(x,y) < 1+\alpha$  
implies $d(T(x),T(y)) < 1$. Since $d$ is an $\ell_p$ metric,
$c_1$-adjacent $x,y \in X$ satisfy $d(x,y)=1$, hence $d(T(x),T(y)) < 1$,
hence $T(x)=T(y)$. Since $X$ is $c_1$-connected, it follows from
Theorem~\ref{weaklyUnif} that $T$ is constant.
\end{proof}

The arguments given as proof for Theorems~3.1 and~3.3 in~\cite{Dol-Nal} 
are marred by confusion of digital and metric continuity. Below, we make 
minor modifications of the stated assumptions - in both theorems, we 
replace the assumption of a complete metric space
with the assumption of an $\ell_p$ metric - and give corrected proofs
that are much shorter than the arguments given in~\cite{Dol-Nal}, in
part because discussion of continuity is unnecessary.

The following is Theorem~3.1 of~\cite{Dol-Nal}, modified as discussed 
above. 

\begin{thm}
\label{DolNal3.1}
Let $(X, d, \kappa)$ be a digital metric space, where $d$ is an
$\ell_p$ metric, and suppose that
$T: X \to X$ satisfies $d(T(x), T(y)) \le \psi(d(x, y))$ for all 
$x,y \in X$, where $\psi: [0,\infty) \to [0,\infty)$ is monotone
nondecreasing and satisfies $lim_{n \to \infty} \psi^n(t) = 0$ for 
all $t > 0$. Then $T$ has a unique fixed point.
\end{thm}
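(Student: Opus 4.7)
The plan is to use Picard iteration, with the $\ell_p$ lattice spacing of $X$ replacing the completeness assumption used in the original version in~\cite{Dol-Nal}. Pick any $x_0 \in X$ and set $x_{n+1} = T(x_n)$. If $x_1 = x_0$, then $x_0$ is already a fixed point and we are done; otherwise $d(x_0,x_1) > 0$.

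First I would use the contractive inequality $d(T(x),T(y)) \le \psi(d(x,y))$ together with the monotonicity of $\psi$ to show by induction that
\[ d(x_n, x_{n+1}) \le \psi^n(d(x_0,x_1)) \quad \mbox{for all } n \in \N, \]
the inductive step being an application of $\psi$ to both sides of the previous inequality. The hypothesis $lim_{n \to \infty} \psi^n(t) = 0$ evaluated at $t = d(x_0,x_1) > 0$ then yields $d(x_n, x_{n+1}) \to 0$.

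Next, the first bullet of the $\ell_p$ Proposition in section~\ref{l-p-def} says that any two points of $\Z^n$ with distance less than $1$ must be equal. Combined with $d(x_n,x_{n+1}) \to 0$, this forces $x_n = x_{n+1} = T(x_n)$ for all sufficiently large $n$, supplying a fixed point. In particular, no Cauchy-sequence or completeness machinery is needed; the lattice spacing of $X$ does the work that completeness does in the classical Browder/Boyd--Wong setting.

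For uniqueness, suppose $u$ and $v$ are fixed points of $T$. Then $d(u,v) = d(T(u),T(v)) \le \psi(d(u,v))$, and monotonicity of $\psi$ gives inductively that $d(u,v) \le \psi^n(d(u,v))$ for every $n$; if $d(u,v) > 0$, this contradicts $\psi^n(d(u,v)) \to 0$, so $u = v$. The only subtle point in the whole argument is recognizing that the $\ell_p$ hypothesis is precisely what allows the iteration to terminate in finitely many steps, so there is no serious obstacle.
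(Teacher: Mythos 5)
Your proposal is correct and follows essentially the same route as the paper's proof: the same Picard iteration, the same induction giving $d(x_n,x_{n+1}) \le \psi^n(d(x_0,x_1)) \to 0$, and termination because distinct points of $\Z^n$ are at $\ell_p$-distance at least $1$ (the paper cites Corollary~\ref{Han-Cauchy-cor} for this step, which rests on exactly the lattice-spacing fact you invoke directly). Your uniqueness argument via $d(u,v) \le \psi^n(d(u,v))$ is a slightly more explicit version of the paper's, and your handling of the degenerate case $x_1 = x_0$ is a harmless extra care the paper omits.
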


\begin{proof}
Let $x_0 \in X$. Inductively, let $x_{n+1}=T(x_n)$ for $n \ge 0$. Then
\[ d(x_n,x_{n+1}) = d(T(x_{n-1}), T(x_n)) \le \psi(d(x_{n-1}, x_n))
\]
and a simple induction argument leads to the conclusion that 
for $n \ge 0$,
\[ d(x_n,x_{n+1}) \le \psi^n(d(x_0,x_1)) \to_{n \to \infty} 0.
\]
By Corollary~\ref{Han-Cauchy-cor}, for sufficiently large $n$
we have $x_n=x_{n+1}=T(x_n)$, so $x_n$ is a fixed point.

Suppose $x$ and $x'$ are fixed points of $T$. Then
\[ d(x,x') = d(T(x),T(x')) \le \psi(d(x,x')).
\]
where equality occurs only for $d(x,x')=0$, i.e., $x=x'$. Thus $T$
has a unique fixed point.
\end{proof}

\begin{remark}
If, in Theorem~\ref{DolNal3.1}, $d$ is an $\ell_p$
metric and $X$ is $c_1$-connected, then $T$ is a constant function.
\end{remark}

\begin{proof}
Given $x \adj_{c_1} x'$ in $X$, we have
\[ d(T(x), T(x')) \le \psi(d(x, x')) <~ \mbox{ (by Proposition~\ref{psi1UpperBound}) } ~d(x,x') = 1,
\]
so $T(x)=T(x')$. Since $X$ is $c_1$-connected, it follows that
$T$ is constant.
\end{proof}

The following is Theorem~3.3 of~\cite{Dol-Nal}, modified as discussed 
above.

\begin{thm}
Let $(X, d, \kappa)$ be a complete digital metric space, where
$d$ is an $\ell_p$ metric, and let
$T: X \to X$ be a weakly uniformly strict digital contraction mapping. 
Then $T$ has a unique fixed point $z$. Moreover, for any $x \in X$,
$lim_{n \to \infty}T^n(x)=z$.
\end{thm}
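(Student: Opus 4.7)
The plan is to take an arbitrary starting point $x_0 \in X$, iterate $T$ to form the orbit $x_n = T^n(x_0)$, and track the consecutive distances $d_n = d(x_n, x_{n+1})$. By Lemma~\ref{Dol-Nal-lem}, whenever $x_n \neq x_{n+1}$ we have $d_{n+1} < d_n$, so $\{d_n\}$ is non-increasing and hence converges to some limit $L \ge 0$.

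First I would rule out $L > 0$ by a Meir--Keeler-style contradiction: applying Definition~\ref{WUSDCdef} with $\varepsilon = L$ produces $\delta > 0$ such that $L \le d(x,y) < L + \delta$ forces $d(T(x),T(y)) < L$. Since $d_n \downarrow L$, for $n$ large enough we have $L \le d_n < L + \delta$, giving $d_{n+1} = d(T(x_n), T(x_{n+1})) < L$; but $d_{n+1} \ge L$ by monotonicity, a contradiction. Hence $L = 0$.

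Next I would invoke the discreteness of the $\ell_p$ metric on lattice points (the proposition stated in Section~\ref{l-p-def}): if $d_n > 0$ then $d_n \ge 1$, so $d_n \to 0$ forces $d_n = 0$ for all sufficiently large $n$. Any such $n$ yields $T(x_n) = x_{n+1} = x_n$, so $x_n$ is a fixed point, which I call $z$. Uniqueness is then immediate from Lemma~\ref{Dol-Nal-lem}: two distinct fixed points $u, v$ would give $d(u,v) = d(T(u),T(v)) < d(u,v)$, impossible. For the convergence statement, the same argument applied to the orbit of any $x \in X$ produces a sequence that is eventually equal to a fixed point of $T$, which by uniqueness must be $z$; hence $\lim_{n \to \infty} T^n(x) = z$.

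The only nontrivial step is the Meir--Keeler contradiction in the second paragraph; everything else is mechanical given Lemma~\ref{Dol-Nal-lem} and the integer-lattice structure of $(X,d)$. Note that completeness of $X$ is never explicitly invoked, because the orbit terminates in finitely many steps rather than merely converging as a Cauchy sequence; this parallels Corollary~\ref{Han-Cauchy-cor} and explains why the modified hypotheses permit a much shorter argument than the one in~\cite{Dol-Nal}.
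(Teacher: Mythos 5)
Your proof is correct and follows the same overall skeleton as the paper's: iterate $T$ from an arbitrary point, use Lemma~\ref{Dol-Nal-lem} to get strict decrease of consecutive distances, conclude the orbit stabilizes at a fixed point, and derive uniqueness (and hence convergence of every orbit) from the same lemma. The one place you diverge is in justifying why the decreasing sequence $d_n$ must actually reach $0$: the paper simply asserts ``since $d$ is an $\ell_p$ metric, for sufficiently large $n$, $x_n=x_{n+1}$,'' which implicitly relies on the fact that the set of $\ell_p$ distances between lattice points below any bound is finite, so an infinite strictly decreasing sequence of such distances is impossible. You instead run the classical Meir--Keeler argument directly from Definition~\ref{WUSDCdef} to force $\lim d_n=0$, and only then use the weaker discreteness fact that a positive $\ell_p$ distance between lattice points is at least $1$. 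Your version is more explicit and uses the full strength of the weakly-uniformly-strict hypothesis (it would survive even for metrics where bounded sets of distance values are infinite but still bounded away from $0$), whereas the paper's version never needs the $\varepsilon$--$\delta$ clause of Definition~\ref{WUSDCdef} beyond Lemma~\ref{Dol-Nal-lem}. Your closing observation that completeness is never used is also consistent with the paper's proof.
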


\begin{proof}
By Lemma~\ref{Dol-Nal-lem},
\[
\mbox{for all } x,y \in X, ~d(T(x),T(y)) < d(x,y).
\]
Let $x_0 \in X$ and, for $n \ge 0$, inductively define
$x_{n+1}= T(x_n)$. We may assume $x_1 \neq x_0$, since, otherwise,
$x_0$ is a fixed point of $T$. We have, for $n>0$,
$ d(x_n,x_{n+1}) = d(T(x_{n-1}), T(x_n)) < d(x_{n-1},x_n)$. Since
$d$ is an $\ell_p$ metric, for sufficiently large $n$,
$x_n = x_{n+1} = T(x_n)$; thus, $x_n$ is a fixed point of $T$.

Suppose $x$ and $y$ are fixed points of $T$. If $x \neq y$ then,
by Lemma~\ref{Dol-Nal-lem},
\[ d(x,y) = d(T(x),T(y)) < d(x,y),
\]
which is impossible. Therefore $x=y$.
\end{proof}

\section{Concluding remarks}
We have corrected or noted limitations of published assertions
concerning fixed points for self-maps on digital metric spaces.
This continues the work of~\cite{BxSt18}.
In many cases, flaws we have noted were so obvious that
reviewers and/or editors of these papers share responsibility
with the authors.

We have also offered improvements to some of the assertions discussed.


\begin{thebibliography}{11}




\bibitem{Bx99}
L. Boxer, A classical construction for the digital fundamental group,
{\em Journal of Mathematical Imaging and Vision} 10 (1999), 51-62.

https://link.springer.com/article/10.1023/A

\bibitem{Bx17}
L. Boxer,
Generalized normal product adjacency in digital topology, 
{\em Applied General Topology} 18 (2) (2017), 401-427.

https://polipapers.upv.es/index.php/AGT/article/view/7798/8718

\bibitem{Bx18}
L. Boxer, Alternate product adjacencies in digital 
topology, 
{\em Applied General Topology} 19 (1) (2018), 21-53

https://polipapers.upv.es/index.php/AGT/article/view/7146

\bibitem{Bx-etal}
L. Boxer, O. Ege, I. Karaca, J. Lopez, and J. Louwsma, 
Digital fixed points, approximate fixed points, and universal functions,
{\em Applied General Topology} 17(2) (2016), 159-172.

https://polipapers.upv.es/index.php/AGT/article/view/4704

\bibitem{BxSt18}
L. Boxer and P.C. Staecker,
Remarks on fixed point assertions in digital topology, submitted.

https://arxiv.org/abs/1806.06110

\bibitem{DalalEtAl}
S. Dalal, I.A. Masmali, and G.Y. Alhamzi,
Common fixed point results for compatible map in digital metric space,
{\em Advances in Pure Mathematics} 8 (2018), 362-371.

http://www.scirp.org/Journal/PaperInformation.aspx?PaperID=83406

\bibitem{Dol-Nal}
U.P. Dolhare and V.V. Nalawade,
Fixed point theorems in digital images and applications
to fractal image compression,
{\em Asian Journal of Mathematics and Computer Research}
25 (1) (2018), 18-37. 

http://www.ikpress.org/abstract/6915

\bibitem{eklefschetz}
O. Ege and I. Karaca, The Lefschetz Fixed Point Theorem for Digital Images, {\em Fixed Point Theory and Applications} 2013:253 2013

https://fixedpointtheoryandapplications.springeropen.com/track/pdf/10.1186/1687-1812-2013-253

\bibitem{EgeKaraca}
O. Ege and I. Karaca, Banach fixed point theorem for digital images,
{\em Journal of Nonlinear Sciences and Applications}, 
8 (2015), 237-245.

http://www.isr-publications.com/jnsa/articles-1797-banach-fixed-point-theorem-for-digital-images

\bibitem{EgeKaraca-a}
O. Ege and I. Karaca,
Digital homotopy fixed point theory,
{\em Comptes Rendus Mathematique} 353 (11) (2015), 1029-1033.

https://www.sciencedirect.com/science/article/pii/S1631073X15001909

\bibitem{EgeKaraca17}
O. Ege and I. Karaca,
Nielsen fixed point theory for digital images,
{\em Journal of Computational Analysis and Applications} 22 (5) (2017), 874-880.

\bibitem{Herman}
G. Herman, Oriented surfaces in digital spaces,
{\em CVGIP: Graphical Models and Image Processing}
55 (1993), 381-396.

https://www.sciencedirect.com/science/article/pii/S1049965283710291




\bibitem{Han16} 
S-E Han,
Banach fixed point theorem from the viewpoint of digital topology,
{\em Journal of Nonlinear Science and Applications} 9 (2016), 895-905.

http://www.isr-publications.com/jnsa/articles-1915-banach-fixed-point-theorem-from-the-viewpoint-of-digital-topology





\bibitem{Hossain-etal}
A. Hossain, R. Ferdausi, S. Mondal, and H. Rashid,
Banach and Edelstein fixed point theorems for digital images,
{\em Journal of Mathematical Sciences and Applications} 5 
(2) (2017), 36-39.

http://www.sciepub.com/jmsa/abstract/8331

\bibitem{Jain}
D. Jain,
Common fixed point theorem for intimate
mappings in digital metric spaces,
{\em International Journal of Mathematics Trends and Technology} 56 (2) (2018),
91-94.

http://www.ijmttjournal.org/archive/ijmtt-v56p511

\bibitem{JR17b}
K. Jyoti and A. Rani,
Fixed point results for expansive mappings in digital metric spaces,
{\em International Journal of Mathematical Archive} 8 (6) (2017), 265-270.

http://www.ijma.info/index.php/ijma/article/view/4884

\bibitem{JR17}
K. Jyoti and A. Rani,
Digital expansions endowed with fixed point theory,
{\em Turkish Journal of Analysis and Number Theory} 5 (5) 
(2017), 146-152.

http://pubs.sciepub.com/tjant/5/5/1/index.html

\bibitem{JR18}
K. Jyoti and A. Rani,
Fixed point theorems for $\beta - \psi - \phi$-expansive
type mappings in digital metric spaces,
{\em Asian Journal of Mathematics and Computer Research}
24 (2) (2018), 56-66.

http://www.ikpress.org/abstract/6855

\bibitem{Mishra-etal}
L.N. Mishra, K. Jyoti, A. Rani, and Vandana,
Fixed point theorems with digital contractions image processing,
{\em Nonlinear Science Letters A} 9 (2) (2018), 104-115.

http://www.nonlinearscience.com/paper.php?pid=0000000271

\bibitem{Ege-etal}
C. Park, O. Ege, S. Kumar, D. Jain, and J. R. Lee,
Fixed point theorems for various contraction conditions in digital metric spaces,
{\em Journal of Computational Analysis and Applications} 26 (8) (2019), 1451-1458.

\bibitem{RaniJyo}
A. Rani, K. Jyoti, and A. Rani,
Common fixed point theorems in digital metric spaces,
{\em International Journal of Scientific \& Engineering Research} 7 (12) (2016),
1704-1715.

\bibitem{Rosenfeld87}
A. Rosenfeld, `Continuous' functions on digital images,
{\em Pattern Recognition Letters} 4 (1986), 177-184.

https://www.sciencedirect.com/science/article/pii/0167865586900176?via

\bibitem{SametEtAl}
B. Samet, C. Vetro, and P. Vetro,
Fixed point theorems for contractive mappings, 
{\em Nonlinear Analysis: Theory, Methods \& Applications} 75 (4) (2012), 2154-2165.

https://www.sciencedirect.com/science/article/pii/S0362546X1100705X

\bibitem{SrideviKKb}
K. Sridevi, M.V.R. Kameswari, and D.M.K. Kiran,
Fixed point theorems for digital contractive
type mappings in digital metric spaces,
{\em International Journal of Mathematics Trends and Technology} 48 (3) (2017),
159-167.

http://www.ijmttjournal.org/archive/ijmtt-v48p522

\bibitem{SrideviKK}
K. Sridevi, M.V.R. Kameswari, and D.M.K. Kiran,
Common fixed points for commuting and weakly compatible self-maps on digital metric spaces,
{\em International Advanced Research Journal in Science, Engineering and Technology}
4 (9) (2017), 21-27.

https://iarjset.com/upload/2017/september-17/IARJSET 4.pdf

\end{thebibliography}
\end{document}